\newtheorem{theorem}{Theorem}
\newtheorem{proposition}[theorem]{Proposition}
\newtheorem{lemma}[theorem]{Lemma}
\newtheorem{definition}[theorem]{Definition}
\newtheorem{corollary}[theorem]{Corollary}
\newtheorem{example}{Example}
\numberwithin{equation}{section}
\numberwithin{theorem}{section}
\newcommand{\Z}{\ensuremath{\mathbb{Z}}}
\newcommand{\R}{\ensuremath{\mathbb{R}}}
\newcommand{\C}{\ensuremath{\mathbb{C}}}
\newcommand{\I}{\ensuremath{\sqrt{-1}}}
\newcommand{\GL}{\ensuremath{\mathrm{GL}}}
\newcommand{\SO}{\ensuremath{\mathrm{SO}}}
\newcommand{\Spin}{\ensuremath{\mathrm{Spin(7)}}}
\newcommand{\SU}{\ensuremath{\mathrm{SU}}}
\newcommand{\Symp}{\ensuremath{\mathrm{Sp}}}
\newcommand{\Spec}{\ensuremath{\mathrm{Spec}}}
\newcommand{\del}{\ensuremath{\mathrm{\partial}}}
\newcommand{\der}{\ensuremath{\mathrm{d}}}
\newcommand{\norm}[1]{\ensuremath{\left| #1 \right|}}
\newcommand{\Norm}[1]{\ensuremath{\left\| #1 \right\|}}
\newcommand{\restrict}[2]{\ensuremath{\left. #1 \right|_{#2}}}
\DeclareMathOperator{\Real}{Re}
\DeclareMathOperator{\Hom}{Hom}
\DeclareMathOperator*{\Hol}{Hol}
\DeclareMathOperator{\Div}{Div}
\DeclareMathOperator{\divisor}{div}
\DeclareMathOperator{\Pic}{Pic}
\def\bM{\mathbf{M}}
\def\bN{\mathbf{N}}
\def\bbT{\mathbb{T}}
\def\quot{\:\:{/\hspace{-0.3cm}/}\:\:}
\def\a{\alpha}
\def\D{\Delta}
\def\sp{\hspace{0.3cm}}
\def\Ahat{\widehat{A}}
\def\P{\mathbb{P}}
\def\Cone{\mathrm{Cone}}
\def\PC{\mathrm{PC}}
\begin{document}
\title{Doubling construction of Calabi-Yau fourfolds 
\linebreak from toric Fano fourfolds}

\author{Mamoru Doi}

 \email{doi.mamoru@gmail.com}

\author{Naoto Yotsutani}

\address{School of Mathematical Sciences at Fudan University,
Shanghai, 200433, P. R. China}
\email{naoto-yotsutani@fudan.edu.cn}

\subjclass[2010]{Primary: 53C25, Secondary: 14J32}
\keywords{Ricci-flat metrics, Calabi-Yau manifolds,
$\mathrm{Spin}(7)$-structures, gluing, doubling, toric geometry.} \dedicatory{}
\date{\today}
\maketitle

\noindent{\bfseries Abstract.}
We give a differential-geometric construction of Calabi-Yau fourfolds by the `doubling' method, 
which was introduced in \cite{DY14} to construct Calabi-Yau threefolds.
We also give examples of Calabi-Yau fourfolds from toric Fano fourfolds.
Ingredients in our construction are \emph{admissible pairs}, which were first dealt with by Kovalev in \cite{K03}.
Here in this paper an admissible pair $(\overline{X},D)$ consists of
a compact K\"{a}hler manifold $\overline{X}$ and
a smooth anticanonical divisor $D$ on $\overline{X}$.
If two admissible pairs $(\overline{X}_1,D_1)$ and $(\overline{X}_2,D_2)$ with $\dim_{\mathbb{C}}\overline{X}_i=4$ satisfy
the \emph{gluing condition}, we can glue $\overline{X}_1\setminus D_1$ and
$\overline{X}_2\setminus D_2$ together to obtain a compact Riemannian $8$-manifold $(M,g)$
whose holonomy group $\mathrm{Hol}(g)$ is contained in $\mathrm{Spin}(7)$.
Furthermore, if the $\widehat{A}$-genus of $M$ equals $2$, then $M$ is a Calabi-Yau fourfold, i.e.,
a compact Ricci-flat K\"{a}hler fourfold with holonomy $\mathrm{SU}(4)$.
In particular, if $(\overline{X}_1,D_1)$ and $(\overline{X}_2,D_2)$
are identical to an admissible pair $(\overline{X},D)$,
then the gluing condition holds automatically, so that we obtain a compact Riemannian 
$8$-manifold $M$ with holonomy contained in $\mathrm{Spin}(7)$.
Moreover, we show that if the admissible pair is obtained from \emph{any} of the toric Fano fourfolds,
then the resulting manifold $M$ is a Calabi-Yau fourfold by computing $\widehat{A}(M)=2$.

\section{Introduction}
The purpose of this paper is to give a gluing construction and examples
of Calabi-Yau fourfolds.
In our previous paper \cite{DY14}, we gave a construction of Calabi-Yau \emph{three}folds from two admissible pairs
$(\overline{X}_1,D_1)$ and $(\overline{X}_2,D_2)$ with $\dim_{\C} \overline{X}_i=3$, so that 
an admissible pair $(\overline{X},D)$ consists of a compact K\"{a}hler threefold $\overline{X}$ 
and an anticanonical $K3$ divisor $D$ on $\overline{X}$ (see Definition $\ref{def:admissible}$).
Also, we gave the `doubling' construction as follows: 
If two admissible pairs
$(\overline{X}_1,D_1)$ and $(\overline{X}_2,D_2)$ with $\dim_{\C} \overline{X}_i=3$
are identical to an admissible pair $(\overline{X},D)$,
then we can always construct a Calabi-Yau threefold by gluing together the two copies of $\overline{X}\setminus D$.
In this paper we shall apply this construction in complex dimension \emph{four}, 
so that we shall consider admissible pairs $(\overline{X},D)$ with $\dim_\C\overline{X}=4$ 
and $D$ an smooth anticanonical divisor on $\overline{X}$.
As in \cite{DY14}, we use Kovalev's gluing technique in \cite{K03}, 
which was used to construct compact $G_2$-manifolds. Also, we use Joyce's analysis
on $\Spin$-structures \cite{J00}, while we used in \cite{DY14} his analysis on $G_2$-structures.

In our construction, we begin with two admissible pairs $(\overline{X}_1,D_1)$
and $(\overline{X}_2,D_2)$ with $\dim_\C\overline{X}_i=4$ as above.
Then by the existence result of
an asymptotically cylindrical Ricci-flat K\"{a}hler form on $\overline{X}_i\setminus D_i$,
each $\overline{X}_i\setminus D_i$ has a natural asymptotically cylindrical
torsion-free $\Spin$-structure $\Phi_i$. 
Now suppose the \emph{asymptotic models} $(D_i\times S^1\times\R_+,\Phi_{i,\rm{cyl}})$
of $(\overline{X}_i\setminus D_i,\Phi_i)$ are isomorphic in a suitable sense,
which is ensured by the {\it gluing condition} defined later (see Section $\ref{sec:gluing_cond}$).
Then as in Kovalev's construction in \cite{K03}, we can glue together
$\overline{X}_1\setminus D_1$ and $\overline{X}_2\setminus D_2$ 
along their cylindrical ends $D_1\times S^1\times (T-1,T+1)$ and
$D_2\times S^1\times (T-1,T+1)$, to obtain a compact $8$-manifold $M_T$.
Moreover, we can glue together the torsion-free $\Spin$-structures 
$\Phi_i$ on $\overline{X}_i\setminus D_i$
to construct a $\der$-closed $4$-form $\widetilde{\Phi}_T$ on $M_T$, which is projected to a 
$\Spin$-structure $\Phi_T =\Theta (\widetilde{\Phi}_T)$
{\it with small torsion} for sufficiently large $T$.
Using the analysis on 
$\Spin$-structures by Joyce \cite{J00},
we shall prove that $\Phi_T$ can be deformed into a torsion-free $\Spin$-structure
for sufficiently large $T$, so that
the resulting compact manifold $M_T$ admits a Riemannian metric with holonomy contained in $\Spin$.
Since $M=M_T$ is simply-connected, the $\Ahat$-genus $\Ahat(M)$ of $M$ is $1,2,3$ or $4$, 
and the holonomy group is determined as $\Spin , \SU(4), \Symp (2), \Symp(1)\times \Symp(1)$ 
respectively (see Theorem $\ref{thm:A-hat}$).
Hence if $\Ahat(M)=2$, then $M$ is a Calabi-Yau fourfold.

For a given admissible pair $(\overline{X}_1, D_1)$, it is difficult to find a suitable
admissible pair $(\overline{X}_2, D_2)$ with $D_2$ isomorphic to $D_1$.
In the three-dimensional case, if $(\overline{X},D)$ is an admissible pair,
then $D$ is a $K3$ surface. Thus $D_1$ and $D_2$ are at least diffeomorphic
and so are the cylindrical ends of $\overline{X}_1\setminus D_1$ and $\overline{X}_2\setminus D_2$
which we glue together. 
Meanwhile in the four-dimensional case, 
the topological type of the Calabi-Yau divisor $D$ 
for an admissible pair $(\overline{X},D)$ varies with $\overline{X}$.
However, if $(\overline{X}_1, D_1)$ and $(\overline{X}_2, D_2)$ are identical to an
admissible pair $(\overline{X}, D)$, then the gluing condition holds automatically.
Therefore we can always 
construct a compact simply-connected Riemannian
$8$-manifold $(M,g)$ with $\Hol(g)\subseteq \mathrm{Spin}(7)$
by doubling an  
admissible pair $(\overline{X},D)$
with $\dim_{\C}\overline{X}=4$.

Beginning with a Fano $n$-fold $V$,
Kovalev obtained an admissible pair $(\overline{X},D)$ as follows. 
It is known that there exists a smooth anticanonical divisor $D$ on $V$,
which is a $K3$ surface when $n=3$ and Calabi-Yau $(n-1)$-fold when $n\geqslant 4$.
Let $S$ be a complex $(n-2)$-dimensional submanifold of $D$ such that $S$ represents 
the self-intersection class $D\cdot D$ in $V$.
Then he showed that if $\overline{X}$ is the blow-up of $V$ along $S$, 
the proper transform of $D$ in $\overline{X}$ (which is isomorphic to $D$ and denoted by $D$ again)
is an anticanonical divisor on $\overline{X}$ with the holomorphic normal bundle $N_{D/\overline{X}}$ trivial,
so that $(\overline{X},D)$ is the desired admissible pair.
In \cite{DY14}, we used Fano threefolds $V$ in order to obtain admissible pairs $(\overline{X},D)$
with $\dim_{\C}\overline{X}=3$ in our doubling construction of Calabi-Yau threefolds.
According to Mori-Mukai's classification of Fano threefolds, we gave $59$ topologically distinct
Calabi-Yau threefolds.

In the four-dimensional case, two problems arise in constructing 
Calabi-Yau manifolds by the doubling. 
The first is that we have no complete classification of Fano fourfolds.
The second is that it is not easy to compute the $\widehat{A}$-genus $\widehat{A}(M)$ of the `doubled'
manifold $M$ if we use an arbitrary Fano fourfold. 
Instead of considering all Fano fourfolds, we focus on the \emph{toric} Fano fourfolds which 
are completely classified by Batyrev \cite{Bat98} and Sato \cite{Sato02}.
Also, toric geometry enables us to compute $\Ahat (M)$ systematically.
In fact, using the admissible pair obtained from \emph{any} of the toric Fano fourfolds ($124$ types),
we show that the doubled manifold $M$ has $\Ahat (M)=2$, and hence $M$ is a Calabi-Yau fourfold.
With this construction, we shall give $69$ topologically distinct
Calabi-Yau fourfolds,
whose Euler characteristics $\chi (M)$ range between $936$ and $2688$.

This paper is organized as follows.
Section $2$ is a brief review of $\Spin$-structures. 
In Section $3$ we establish our
gluing construction of Calabi-Yau fourfolds from admissible pairs.
The rest of the paper is devoted to constructing examples
from toric Fano fourfolds.
The reader who is not familiar with analysis can
check Definition $\ref{def:admissible}$ of admissible pairs, go to Section $3.4$ where
the gluing theorems are stated, and then proceed to Section $4$,
skipping Section $2$ and the rest of Section $3$.
In Section $\ref{sec:ToricGeom}$ we outline the quotient construction of toric varieties in
`Geometric Invariant Theory'. We also give a recipe for computing $\widehat{A}(M)$ 
in the proof of Proposition $\ref{prop:CY4}$. Section $\ref{sec:Ex}$ illustrates concrete examples
of our doubling construction of Calabi-Yau fourfolds. Then the last section lists all data
of the resulting Calabi-Yau fourfolds from toric Fano fourfolds.\\

\noindent {\bfseries Acknowledgements.} The second author would like to thank Dr. Craig van Coevering and Dr.
Jinxing Xu for their valuable comments when he was in University of Science and Technology of China.

\section{Geometry of $\Spin$-structures}
Here we shall recall some basic facts about $\Spin$-structures on oriented $8$-manifolds.
For more details, see \cite{J00}, Chapter $10$.

We begin with the definition of $\Spin$-structures on oriented vector spaces of dimension $8$.
\begin{definition}\rm
Let $V$ be an oriented real vector space of dimension $8$.
Let $\{\bm{\theta}^1,\dots ,\bm{\theta}^8\}$ be an oriented basis of $V$.
Set
\begin{equation*}
\begin{aligned}
\bm{\Phi}_0=&\bm{\theta}^{1234}+\bm{\theta}^{1256}+\bm{\theta}^{1278}+\bm{\theta}^{1357}
-\bm{\theta}^{1368}-\bm{\theta}^{1458}-\bm{\theta}^{1467}\\
&-\bm{\theta}^{2358}-\bm{\theta}^{2367}-\bm{\theta}^{2457}+\bm{\theta}^{2468}
+\bm{\theta}^{3456}+\bm{\theta}^{3478}+\bm{\theta}^{5678},\\
\mathbf{g}_0=&\sum_{i=1}^8\bm{\theta}^i\otimes\bm{\theta}^i,
\end{aligned}
\end{equation*}
where $\bm{\theta}^{ij\dots k}=\bm{\theta}^i\wedge\bm{\theta}^j\wedge\dots\wedge\bm{\theta}^k$.
Define the $\GL_+(V)$-orbit spaces
\begin{align*}
\mathcal{A}(V)&=\Set{a^*\bm{\Phi}_0|a\in\GL_+(V)},\\
\mathcal{M}et(V)&=\Set{a^*\mathbf{g}_0|a\in\GL_+(V)}.
\end{align*}
\end{definition}
We call $\mathcal{A}(V)$ the set of {\it Cayley $4$-forms}
(or the set of {\it $\Spin$-structures}) on $V$.
On the other hand, $\mathcal{M}et(V)$ is the set of positive-definite inner products on $V$,
which is also a homogeneous space isomorphic to $\GL_+(V)/\SO(V)$, where $\SO(V)$ is defined by
\begin{equation*}
\SO(V)=\Set{a\in\GL_+(V)|a^*\mathbf{g}_0=\mathbf{g}_0}.
\end{equation*}

Now the group $\Spin$ is defined as the isotropy of the action of $\GL(V)$ (in place of $\GL_+(V)$)
on $\mathcal{A}(V)$ at $\bm{\Phi}_0$:
\begin{equation*}
\Spin =\Set{a\in\GL(V)|a^*\bm{\Phi}_0=\bm{\Phi}_0}.
\end{equation*}
Then one can show that $\Spin$ is a compact Lie group of dimension $27$ which is a
Lie subgroup of $\SO(V)$ \cite{Hy90}.
Thus we have a natural projection
\begin{equation*}
\xymatrix{\mathcal{A}(V)\cong\GL_+(V)/\Spin\ar@{>>}[r]&\GL_+(V)/\SO(V)\cong\mathcal{M}et(V)},
\end{equation*}
so that each Cayley $4$-form (or $\Spin$-structure) $\bm{\Phi}\in\mathcal{A}(V)$
defines a positive-definite inner product $\mathbf{g}_{\bm{\Phi}}\in\mathcal{M}et(V)$ on $V$.
\begin{definition}\label{def:T(V)}\rm
Let $V$ be an oriented vector space of dimension $8$. 
If $\bm{\Phi}\in\mathcal{A}(V)$, then we have the orthogonal decomposition
\begin{equation}\label{eq:ortho_decomp}
\wedge^4 V^*=T_{\bm{\Phi}}\mathcal{A}(V)\oplus T_{\bm{\Phi}}^\perp\mathcal{A}(V)
\end{equation}
with respect to the induced inner product $\mathbf{g}_{\bm{\Phi}}$.
We define a neighborhood 
$\mathcal{T}(V)$ of $\mathcal{A}(V)$ in $\wedge^4 V^*$ by
\begin{equation*}
\begin{aligned}
\mathcal{T}(V)=\Set{
\bm{\Phi}+\bm{\alpha}|
\bm{\Phi}\in\mathcal{A}(V)\text{ and }
\bm{\alpha}\in T_{\bm{\Phi}}^\perp\mathcal{A}(V)
\text{ with }\norm{\bm{\alpha}}_{{\mathbf{g}}_{\bm{\Phi}}}<\rho}.
\end{aligned}
\end{equation*}
We choose and fix a small constant $\rho$ so that 
any $\bm{\chi}\in\mathcal{T}(V)$ is uniquely written
as $\bm{\chi}=\bm{\Phi}+\bm{\alpha}$ with $\bm{\alpha}\in T_{\bm{\Phi}}^\perp\mathcal{A}(V)$.
Thus we can define the projection
\begin{equation*}
\Theta :\mathcal{T}(V)\longrightarrow\mathcal{A}(V),\qquad \bm{\chi}\longmapsto\bm{\Phi}.
\end{equation*}
\end{definition}
\begin{lemma}[Joyce \cite{J00}, Proposition $10.5.4$]\label{lem:ASD-subspace}
Let $\bm{\Phi}\in\mathcal{A}(V)$ and 
$\wedge^4 V^*=\wedge^4_+ V^*\oplus \wedge^4_- V^*$ be the orthogonal decomposition with 
respect to $\mathbf{g}_{\bm{\Phi}}$, where $\wedge^4_+ V^*$ (resp. $\wedge^4_- V^*$) 
is the set of self-dual (resp. anti-self-dual) $4$-forms on $V$.
Then we have the following inclusion:
\begin{equation*}
\wedge^4_- V^*\subset T_{\bm{\Phi}}\mathcal{A}(V).
\end{equation*}
\end{lemma}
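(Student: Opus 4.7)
My approach is to identify $T_{\bm{\Phi}}\mathcal{A}(V)$ as a $\Spin$-representation and then apply Schur's lemma. Since $\mathcal{A}(V)$ is the $\GL_+(V)$-orbit of $\bm{\Phi}_0$ with stabilizer $\Spin$, it is a homogeneous space $\GL_+(V)/\Spin$. The infinitesimal action $X\longmapsto \left.\tfrac{d}{dt}\right|_{t=0}\exp(tX)^*\bm{\Phi}$ therefore identifies
\[
T_{\bm{\Phi}}\mathcal{A}(V)\cong \mathfrak{gl}(V)/\mathfrak{spin}(7),
\]
where $\mathfrak{spin}(7)\subset\mathfrak{gl}(V)$ is the infinitesimal annihilator of $\bm{\Phi}$.

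Next I would use $\mathbf{g}_{\bm{\Phi}}$ to identify $V\cong V^*$ and write down the irreducible $\Spin$-decompositions. Since $V\cong\mathbf{8}$ is the spin representation of $\Spin$, standard calculations give
\[
\mathfrak{gl}(V)\cong S^2 V^*\oplus \wedge^2 V^*\cong (\mathbf{1}\oplus\mathbf{35})\oplus (\mathbf{7}\oplus\mathbf{21}),
\]
with $\mathfrak{spin}(7)\cong \mathbf{21}$, and
\[
\wedge^4 V^* =\wedge^4_+ V^*\oplus \wedge^4_- V^*\cong (\mathbf{1}\oplus\mathbf{7}\oplus\mathbf{27})\oplus \mathbf{35}.
\]
Passing to the quotient yields $T_{\bm{\Phi}}\mathcal{A}(V)\cong \mathbf{1}\oplus\mathbf{7}\oplus\mathbf{35}$ as a $\Spin$-module.

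Finally, consider the $\Spin$-equivariant linear map $S^2_0 V^*\longrightarrow \wedge^4 V^*$, $X\mapsto X\cdot\bm{\Phi}$, restricted to the $\mathbf{35}$-summand of $\mathfrak{gl}(V)$. Its source is irreducible of type $\mathbf{35}$ and lies outside $\mathfrak{spin}(7)$, so by Schur's lemma the map is injective. Its image is then an irreducible 35-dimensional $\Spin$-subrepresentation of $\wedge^4 V^*$; but $\wedge^4 V^*$ contains $\mathbf{35}$ with multiplicity one, realized precisely by $\wedge^4_- V^*$ (since $\wedge^4_+ V^*$ decomposes only into $\mathbf{1}\oplus\mathbf{7}\oplus\mathbf{27}$). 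Therefore the image must equal $\wedge^4_- V^*$, which gives the desired inclusion $\wedge^4_- V^*\subset T_{\bm{\Phi}}\mathcal{A}(V)$.

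The main technical input is the $\Spin$-irreducible decomposition of $\wedge^k V^*$ and $\mathfrak{gl}(V)$, together with the fact that the 35-dimensional irreducible representation of $\Spin$ is unique up to isomorphism; both are standard weight/character calculations available in \cite{J00}. Once these are in hand, the argument is a short application of Schur's lemma and a dimension count ($\dim T_{\bm{\Phi}}\mathcal{A}(V)=64-21=43=1+7+35$), with no direct computation involving $\bm{\Phi}_0$ in explicit coordinates required.
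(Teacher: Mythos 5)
Your argument is correct: the identification $T_{\bm{\Phi}}\mathcal{A}(V)\cong\mathfrak{gl}(V)/\mathfrak{spin}(7)\cong\mathbf{1}\oplus\mathbf{7}\oplus\mathbf{35}$ together with the multiplicity-one occurrence of $\mathbf{35}$ in $\wedge^4V^*$ (as $\wedge^4_-V^*$) forces $\wedge^4_-V^*\subset T_{\bm{\Phi}}\mathcal{A}(V)$. The paper gives no proof of this lemma, only the citation to Joyce, Proposition $10.5.4$, and your representation-theoretic argument is essentially the one used there, so this matches the intended proof.
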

Now we define $\Spin$-structures on oriented $8$-manifolds.
\begin{definition}\rm
Let $M$ be an oriented $8$-manifold.
We define $\mathcal{A}(M)\longrightarrow M$ to be the fiber bundle whose fiber over $x$ is
$\mathcal{A}(T^*_x M)\subset\wedge^4 T^*_x M$. Then
$\Phi\in C^\infty (\wedge^4 T^* M)$ is a {\it Cayley $4$-form}
or a {\it $\Spin$-structure} on $M$ if
$\Phi\in C^\infty (\mathcal{A}(M))$, i.e.,
$\Phi$ is a smooth section of $\mathcal{A}(M)$.
If $\Phi$ is a $\Spin$-structure on $M$, then $\Phi$ induces a Riemannian metric $g_\Phi$
since $\restrict{\Phi}{x}$ for each $x\in M$ induces a positive-definite inner product
$g_{\restrict{\Phi}{x}}$ on $T_x M$. A $\Spin$-structure $\Phi$ on $M$ is said to be
{\it torsion-free} if it is parallel with respect to the induced Riemannian metric $g_\Phi$,
i.e., $\nabla_{g_\Phi}\Phi =0$,
where $\nabla_{g_\Phi}$ is the Levi-Civita connection of $g_\Phi$.
\end{definition}
\begin{definition}\rm
Let $\Phi$ be a $\Spin$-structure on an oriented $8$-manifold $M$. We define $\mathcal{T}(M)$
be the the fiber bundle whose fiber over $x$ is $\mathcal{T}(T^*_x M)\subset\wedge^4 T^*_x M$. 
Then for the constant $\rho$ given in Definition $\ref{def:T(V)}$,
we have the well-defined projection $\Theta :\mathcal{A}(M)\longrightarrow\mathcal{T}(M)$.
Also, we see from Lemma $\ref{lem:ASD-subspace}$ that $\wedge^4_- T^*M\subset T_\Phi\mathcal{A}(M)$
as subbundles of $\wedge^4 T^*M$. 
\end{definition}
\begin{lemma}[Joyce \cite{J00}, Proposition $10.5.9$]\label{lem:epsilons}
Let $\Phi$ be a $\Spin$-structure on $M$.
There exist such that $\epsilon_1,\epsilon_2,\epsilon_3$ independent of $M$ and $\Phi$,
such that the following is true.

If $\eta\in C^\infty (\wedge^4T^* M)$ satisfies $\Norm{\eta}_{C^0}\leqslant\epsilon_1$, then $\Phi +\eta\in \mathcal{T}(M)$.
For this $\eta$, $\Theta (\Phi +\eta )$ is well-defined and expanded as
\begin{equation}\label{eq:expansion}
\Theta (\Phi +\eta )=\Phi +p(\eta )-F(\eta ),
\end{equation}
where $p(\eta )$ is the linear term and $F(\eta )$ is the higher order term in $\eta$,
and for each $x\in M$, $\restrict{p(\eta )}{x}$ is the $T_\Phi\mathcal{A}(V)$-component of $\restrict{\eta}{x}$ 
in the orthogonal decomposition \eqref{eq:ortho_decomp} for $V=T^*_x M$. 
Also, we have the following pointwise estimates for any $\eta, \eta'\in C^\infty (\wedge^4 T^* M)$
with $\norm{\eta},\norm{\eta'}\leqslant\epsilon_1$:
\begin{equation*}
\begin{aligned}
\norm{F(\eta )-F(\eta')}\leqslant& \epsilon_2\norm{\eta -\eta'}(\norm{\eta}+\norm{\eta'}),\\
\norm{\nabla (F(\eta )-F(\eta'))}\leqslant &\epsilon_3 \{
\norm{\eta -\eta'}(\norm{\eta}+\norm{\eta'})\norm{\der\Phi}+\norm{\nabla (\eta -\eta')}(\norm{\eta}+\norm{\eta'})\\
&\phantom{\epsilon_3\{ }+\norm{\eta -\eta'}(\norm{\nabla\eta}+\norm{\nabla\eta'})\} .
\end{aligned}
\end{equation*}
Here all norms are measured by $g_\Phi$.
\end{lemma}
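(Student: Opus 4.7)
The plan is to first establish the statement pointwise at a single oriented $8$-dimensional vector space $V$ and then bundle-ify. Fix the reference $4$-form $\bm{\Phi}_0$ and observe that $\mathcal{A}(V) = \GL_+(V)\cdot\bm{\Phi}_0$ is a smooth embedded submanifold of $\wedge^4 V^*$ (its dimension is $\dim\GL_+(V)-\dim\Spin(7)$), with tangent space at $\bm{\Phi}$ given by the image of $\mathfrak{gl}(V)$ under infinitesimal pullback. Since this submanifold is embedded, a standard tubular-neighborhood argument produces a constant $\rho>0$ such that the $g_{\bm{\Phi}}$-orthogonal normal exponential is a diffeomorphism from $\mathcal{T}(V)$ onto a neighborhood of $\mathcal{A}(V)$; this is precisely the $\rho$ in Definition~\ref{def:T(V)}. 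Taylor-expanding the smooth projection $\Theta$ at $\bm{\Phi}$ gives $\Theta(\bm{\Phi}+\bm{\eta})=\bm{\Phi}+p(\bm{\eta})-F(\bm{\eta})$, where $p$ must be the $g_{\bm{\Phi}}$-orthogonal projection onto $T_{\bm{\Phi}}\mathcal{A}(V)$ (this is the only linear map with the correct derivative), and $F$ is a smooth function vanishing to second order at the origin.

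Next I would upgrade this local picture to uniform estimates independent of $\bm{\Phi}$. The key observation is $\GL_+(V)$-equivariance: for $a\in\GL_+(V)$ we have $a^*\mathcal{A}(V)=\mathcal{A}(V)$, $a^*\mathcal{T}(V)=\mathcal{T}(V)$, $\Theta\circ a^*=a^*\circ\Theta$, and $\norm{a^*\bm{\eta}}_{\mathbf{g}_0}=\norm{\bm{\eta}}_{\mathbf{g}_{\bm{\Phi}}}$ whenever $\bm{\Phi}=a^*\bm{\Phi}_0$. Pulling back by any such $a$ reduces the problem to a neighborhood of the fixed point $\bm{\Phi}_0$, where $F$ is a fixed smooth function vanishing to second order; shrinking the neighborhood and invoking compactness of closed balls then yields universal constants $\epsilon_1$ and $\epsilon_2$ such that $\Phi+\eta\in\mathcal{T}(V)$ and $\norm{F(\bm{\eta})-F(\bm{\eta}')}\leqslant\epsilon_2\norm{\bm{\eta}-\bm{\eta}'}(\norm{\bm{\eta}}+\norm{\bm{\eta}'})$ pointwise; these estimates transfer immediately to the bundle $\wedge^4 T^*M$.

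For the gradient estimate, write $F$ as a smooth fiberwise map depending on the base point $\bm{\Phi}$, so $F=F_{\bm{\Phi}}(\bm{\eta})$. Differentiating along $M$ with the Levi-Civita connection of $g_\Phi$ produces two kinds of contributions: derivatives acting on the argument $\bm{\eta}$, and derivatives acting on the base point $\Phi$ (since the tangent/normal splitting, and hence $F_\Phi$, varies with $\Phi$). The first kind gives terms involving $\nabla(\bm{\eta}-\bm{\eta}')$ and $\nabla\bm{\eta}$, $\nabla\bm{\eta}'$; the second kind gives terms involving $\nabla\Phi$. Because $g_\Phi$ is algebraically determined by $\Phi$ and $\nabla_{g_\Phi}$ differs from exterior differentiation by tensors built polynomially from $\Phi$ and its first derivatives, $\nabla\Phi$ can ultimately be controlled by $\der\Phi$ up to universal bounds. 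Expanding $F_{\bm{\Phi}}(\bm{\eta})-F_{\bm{\Phi}}(\bm{\eta}')$ as a smooth function vanishing to second order in $(\bm{\eta},\bm{\eta}')$ and applying the mean value theorem in both $\bm{\eta}$ and in $\Phi$ then assembles into the claimed estimate with the three displayed terms, with $\epsilon_3$ again universal by the equivariance-plus-compactness argument.

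The main obstacle is bookkeeping in this last step: getting the exact shape of the gradient estimate, in particular isolating the $\norm{\der\Phi}$ factor (rather than $\norm{\nabla\Phi}$, or mixed terms), requires carefully tracking how the fiberwise projection $F_{\bm{\Phi}}$ depends smoothly on $\bm{\Phi}$ and showing that the resulting normal-bundle connection terms are algebraic in $\Phi$. Once this dependence is made explicit, the $C^0$ estimate on $F$ plus a single application of the mean value theorem in $\bm{\Phi}$ delivers the $\norm{\der\Phi}$ factor with a universal constant, completing the proof.
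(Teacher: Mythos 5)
First, note that the paper itself does not prove this lemma: it is quoted directly from Joyce (Proposition $10.5.9$), so the only meaningful comparison is with Joyce's argument. Your architecture --- realizing $\mathcal{A}(V)$ as a $\GL_+(V)$-orbit with a tubular neighbourhood on which $\Theta$ is the nearest-point projection, Taylor-expanding so that $p$ is forced to be the $\mathbf{g}_{\bm{\Phi}}$-orthogonal projection onto $T_{\bm{\Phi}}\mathcal{A}(V)$ and $F$ vanishes to second order, and then using $\GL_+(V)$-equivariance plus compactness to make $\rho$, $\epsilon_1$, $\epsilon_2$ universal --- is the correct one and matches his.

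The genuine gap sits exactly at the point you yourself flag as ``the main obstacle,'' and the justification you offer there goes in the wrong direction. Differentiating the fibrewise map $F_\Phi(\eta)$ along $M$, the chain-rule contribution from the variation of the base point is bounded by $C\,\norm{\eta-\eta'}(\norm{\eta}+\norm{\eta'})\,\norm{\nabla_{g_\Phi}\Phi}$, so to obtain the stated estimate you need the pointwise inequality $\norm{\nabla_{g_\Phi}\Phi}\leqslant C\norm{\der\Phi}$ with a universal constant $C$. Your proposed reason --- that $\nabla_{g_\Phi}$ differs from exterior differentiation by tensors algebraic in $\Phi$ and its first derivatives --- only yields the trivial direction $\norm{\der\Phi}\leqslant C\norm{\nabla_{g_\Phi}\Phi}$, since $\der\Phi$ is the alternation of $\nabla_{g_\Phi}\Phi$. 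The converse is a $\Spin$-specific representation-theoretic fact: the intrinsic torsion $\nabla_{g_\Phi}\Phi$ lies in a bundle modelled on $V^*\otimes\bigl(\mathfrak{so}(8)/\mathfrak{spin}(7)\bigr)$, of dimension $8\times 7=56=\dim\wedge^5V^*$, and the alternation map from this space into $\wedge^5 V^*$ is injective; this injectivity is precisely the quantitative content behind the equivalence of conditions (1) and (2) in Theorem $\ref{thm:d-closed_Spin}$. Without invoking it, your argument delivers the gradient estimate with $\norm{\nabla_{g_\Phi}\Phi}$ in place of $\norm{\der\Phi}$, which is strictly weaker than the claim and, more importantly, useless for the application (where one only controls $\der\widetilde{\Phi}_T=0$ and the smallness of $\der\Phi_T$, not of $\nabla\Phi_T$ a priori).
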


The following result is important in that it relates the holonomy contained in $\Spin$ 
with the $\der$-closedness of the $\Spin$-structure.
\begin{theorem}[Salamon \cite{S89}, Lemma $12.4$]\label{thm:d-closed_Spin}
Let $M$ be an oriented $8$-manifold. Let $\Phi$ be a $\Spin$-structure on $M$ and $g_\Phi$
the induced Riemannian metric on $M$.
Then the following conditions are equivalent.
\renewcommand{\labelenumi}{\theenumi}
\renewcommand{\theenumi}{(\arabic{enumi})}
\begin{enumerate}
\item $\Phi$ is a torsion-free $\Spin$-structure, i.e., $\nabla_{g_\Phi}\Phi =0$.
\item $\der\Phi =0$.
\item The holonomy group $\Hol (g_\Phi )$ of $g_\Phi$ is contained in $\Spin$.
\end{enumerate}
\end{theorem}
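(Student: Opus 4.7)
The plan is to prove the three implications in the order $(1)\Rightarrow(2)$, $(1)\Leftrightarrow(3)$, $(2)\Rightarrow(1)$, with essentially all of the content concentrated in the last one.

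\emph{Easy directions.} Since the exterior derivative is obtained by antisymmetrizing the Levi-Civita covariant derivative, $(1)\Rightarrow(2)$ is immediate. For $(1)\Leftrightarrow(3)$ I would invoke the holonomy principle. Because $\Spin\subset\SO(V)$ is by definition the stabilizer of $\bm{\Phi}_0$, the bundle $\mathcal{A}(M)$ is the associated bundle $F_{\SO}(M)\times_{\SO(V)}(\SO(V)/\Spin)$. A section $\Phi\in C^\infty(\mathcal{A}(M))$ is parallel with respect to $\nabla_{g_\Phi}$ iff parallel transport preserves the value of $\Phi$ at every point, which by the holonomy principle is equivalent to the holonomy representation fixing a point of the fiber $\SO(V)/\Spin$, i.e.\ to $\Hol(g_\Phi)\subseteq\Spin$.

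\emph{The key implication $(2)\Rightarrow(1)$.} The strategy is to prove that, as $\Spin$-representations, the intrinsic torsion $\nabla_{g_\Phi}\Phi$ is isomorphic to $\der\Phi$ via a $\Spin$-equivariant linear map, so that vanishing of the latter forces vanishing of the former. At each point $x\in M$, $\nabla_{g_\Phi}\Phi$ takes values in $T^*_x M\otimes T^\perp_\Phi\mathcal{A}(M)_x$; fiberwise, the normal space $T^\perp_\Phi\mathcal{A}(V)$ is isomorphic to the $\Spin$-module $\mathfrak{so}(8)/\mathfrak{spin}(7)\cong\R^7$, so the intrinsic-torsion bundle has rank $8\times 7=56$. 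On the other hand, $\der\Phi$ is a section of $\Lambda^5 T^*M$, which has rank $\binom{8}{5}=56$, and the antisymmetrization map $\nabla_{g_\Phi}\Phi\mapsto\der\Phi$ is manifestly $\Spin$-equivariant. Under $\Spin$ one has the irreducible decomposition $\Lambda^5 T^*M\cong\Lambda^3 T^*M=\Lambda^3_8\oplus\Lambda^3_{48}$ (via Hodge duality), and a parallel computation shows $T^*M\otimes(\mathfrak{so}(8)/\mathfrak{spin}(7))$ also splits as $\R^8\oplus\R^{48}$. Schur's lemma then reduces the proof that the map is an isomorphism to checking nonvanishing on each irreducible summand, which can be verified on the flat model by differentiating $a_t^*\bm{\Phi}_0$ along one-parameter families $a_t\in\GL_+(V)$ with $a_0=\id$ tangent to each isotypic direction. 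Once the map is known to be an isomorphism, $\der\Phi=0$ implies $\nabla_{g_\Phi}\Phi=0$, which is $(1)$.

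\emph{Main obstacle.} The delicate step is upgrading the dimension count and Schur's lemma to an actual isomorphism of $\Spin$-representations between the intrinsic torsion space and $\Lambda^5 V^*$. Dimensional compatibility and equivariance alone only give a sum of isotypic isomorphisms and zero maps; excluding the zero summands is what requires genuine representation theory of $\Spin$, together with an explicit check at the model Cayley form $\bm{\Phi}_0$. This is exactly the content of Salamon's lemma and is the only nontrivial ingredient; everything else is formal.
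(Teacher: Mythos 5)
The paper does not prove this statement at all: it is imported verbatim from Salamon (Lemma 12.4 of \cite{S89}) and used as a black box, so there is no in-paper argument to compare against. Judged on its own, your outline is the standard proof (essentially Fern\'andez's characterization of torsion-free $\Spin$-structures): $(1)\Rightarrow(2)$ by antisymmetrizing the torsion-free connection, $(1)\Leftrightarrow(3)$ by the holonomy principle, and $(2)\Rightarrow(1)$ by showing that the $\Spin$-equivariant antisymmetrization map from the intrinsic-torsion space $T^*M\otimes\bigl(\mathfrak{so}(8)/\mathfrak{spin}(7)\bigr)$ to $\Lambda^5T^*M$ is an isomorphism of $56$-dimensional representations, both of which split as $\mathbf{8}\oplus\mathbf{48}$. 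The architecture is right, and your closing remark correctly locates the entire nontrivial content in the verification that neither isotypic block of that map vanishes. But note that this verification is the theorem: as written, your proof defers it to an unexecuted "explicit check at the model Cayley form," so what you have is a correct reduction, not a complete proof. If you want to close the gap without redoing Fern\'andez's computation, cite it (or Salamon) explicitly at that point.

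One genuine misstatement to fix: you say $\nabla_{g_\Phi}\Phi$ takes values in $T^*_xM\otimes T^\perp_\Phi\mathcal{A}(V)$ and that $T^\perp_\Phi\mathcal{A}(V)\cong\mathfrak{so}(8)/\mathfrak{spin}(7)\cong\R^7$. In the paper's notation $\mathcal{A}(V)$ is the $\GL_+(V)$-orbit of $\bm{\Phi}_0$, so $T_{\bm{\Phi}}\mathcal{A}(V)\cong\Lambda^4_1\oplus\Lambda^4_7\oplus\Lambda^4_{35}$ has dimension $43$ and $T^\perp_{\bm{\Phi}}\mathcal{A}(V)\cong\Lambda^4_{27}$ has dimension $27$, not $7$. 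Because the Levi-Civita connection is metric, $\nabla_X\Phi$ is actually tangent to the $\SO(8)$-orbit of $\Phi$ inside the fiber, i.e.\ it lies in $T^*M\otimes\Lambda^4_7$ with $\Lambda^4_7\cong\mathfrak{so}(8)/\mathfrak{spin}(7)$ --- a subspace of $T_\Phi\mathcal{A}(V)$, not of its orthogonal complement. Your dimension count ($8\times7=56$) and the subsequent representation theory are unaffected because you used the correct module $\mathfrak{so}(8)/\mathfrak{spin}(7)$, but the identification with $T^\perp_\Phi\mathcal{A}(V)$ as defined in Definition~\ref{def:T(V)} is wrong and should be corrected.
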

Now suppose $\widetilde{\Phi}\in C^\infty(\mathcal{T}(M))$ with $\der\widetilde{\Phi}=0$.
We shall construct such a form $\widetilde{\Phi}$ in Section $\ref{section:T-approx}$.
Then $\Phi =\Theta (\widetilde{\Phi})$ is a $\Spin$-structure on $M$.
If $\eta\in C^\infty(\wedge^4 T^*M)$ with $\Norm{\eta}_{C^0}\leqslant\epsilon_1$,
then $\Theta (\Phi +\eta )$ is expanded as in \eqref{eq:expansion}.
Setting $\phi =\widetilde{\Phi}-\Phi$ and using $\der\widetilde{\Phi}=0$, we have 
\begin{equation*}
\der\Theta (\Phi +\eta )=-\der\phi +\der p(\eta )-\der F(\eta ).
\end{equation*}
Thus the equation $\der\Theta (\Phi +\eta )=0$ for $\Theta (\Phi +\eta )$ to be a torsion-free 
$\Spin$-structure is equivalent to
\begin{equation}\label{eq:torsion-free_1}
\der p (\eta )=\der\phi +\der F(\eta ).
\end{equation}
In particular, we see from Lemma $\ref{lem:ASD-subspace}$ that 
if $\eta\in C^\infty (\wedge^4_-T^*M)$ then $p(\eta )=\eta$,
so that equation \eqref{eq:torsion-free_1} becomes
\begin{equation}\label{eq:torsion-free_2}
\der \eta=\der\phi +\der F(\eta ).
\end{equation}
Joyce proved by using the iteration method and 
$\der C^\infty (\wedge^4_- T^*M)=\der C^\infty (\wedge^4 T^*M)$
that equation \eqref{eq:torsion-free_2} has a solution $\eta\in C^\infty (\wedge^4_-T^*M)$
if $\phi$ is sufficiently small with respect to certain norms (see Theorem $\ref{thm:Spin_existence}$).
\begin{theorem}[Joyce \cite{J00}, Theorem $10.6.1$]\label{thm:A-hat}
Let $(M,g)$ be a compact Riemannian $8$-manifold such that its holonomy group $\Hol(g)$ is contained in $\Spin$.
Then the $\Ahat$-genus $\Ahat (M)$ of $M$ satisfies
\begin{equation}\label{eq:A-hat}
48\Ahat (M)=3\tau (M)-\chi (M),
\end{equation}
where $\tau (M)$ and $\chi(M)$ is the signature and the Euler characteristic of $M$ respectively.
Moreover, if $M$ is simply-connected, then $\Ahat (M)$ is $1,2,3$ or $4$, and the holonomy group of $(M,g)$ is determined as
\begin{equation*}
\Hol (g)=\begin{cases}
\Spin&\text{if }\Ahat (M)=1,\\
\SU (4)&\text{if }\Ahat (M)=2,\\
\Symp (2)&\text{if }\Ahat (M)=3,\\
\Symp (1)\times\Symp (1)&\text{if }\Ahat (M)=4.
\end{cases}
\end{equation*}
\end{theorem}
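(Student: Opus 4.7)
I would split the statement into two pieces: the topological identity $48\Ahat(M)=3\tau(M)-\chi(M)$, and the determination of the possible values of $\Ahat(M)$ together with the corresponding holonomy groups.

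For the identity, the plan is to combine Hirzebruch's signature formula and the $\Ahat$-genus formula in dimension $8$,
\[
\tau(M)=\tfrac{1}{45}\int_M(7p_2-p_1^2),\qquad \Ahat(M)=\tfrac{1}{5760}\int_M(7p_1^2-4p_2),
\]
where $p_i=p_i(TM)$. A direct manipulation yields $3\tau(M)-48\Ahat(M)=\tfrac{1}{8}\int_M(4p_2-p_1^2)$, so it suffices to establish the pointwise characteristic-class identity
\[
8\,e(TM)=4p_2(TM)-p_1(TM)^2
\]
whenever the structure group of $TM$ is reduced to $\Spin$. I would prove this by the splitting principle applied to the inclusion $\Spin\hookrightarrow\SO(8)$, which is realized by the real $8$-dimensional spin representation. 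Its weights with respect to a maximal torus of $\Spin$ are $\tfrac12(\pm e_1\pm e_2\pm e_3)$, and pairing them into four Chern roots $y_1,\dots,y_4$ by opposite signs gives $p_1=\sum y_i^2$, $p_2=\sum_{i<j} y_i^2y_j^2$, $e=y_1y_2y_3y_4$; an elementary algebraic check then yields $4p_2-p_1^2=8y_1y_2y_3y_4$.

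For the second assertion, the key observation is that $\Hol(g)\subseteq\Spin$ implies $g$ is Ricci-flat, hence scalar-flat. Lichnerowicz's Weitzenb\"ock formula $D^2=\nabla^*\nabla+\tfrac14 R$ applied to the Dirac operator $D$ then forces every harmonic spinor on the compact manifold $M$ to be parallel, and the Atiyah--Singer index theorem gives $\Ahat(M)=n_+-n_-$, where $n_\pm$ is the number of independent parallel spinors of positive/negative chirality. For simply-connected $M$, $n_\pm$ equals the dimension of the $\Hol(g)$-invariant subspace of the half-spin representation $\Delta^\pm$ of $\Spin(8)$. By the de Rham decomposition theorem together with Berger's classification, the connected subgroups of $\SO(8)$ contained in $\Spin$ and admitting a nonzero parallel spinor are exactly $\Spin$, $\SU(4)$, $\Symp(2)$, and $\Symp(1)\times\Symp(1)$. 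Since $\Spin$ sits inside a single chirality factor of $\Spin(8)$, every such parallel spinor has positive chirality, so $n_-=0$; the standard branching rules for $\Delta^+$ under each of the four subgroups give $n_+=1,2,3,4$ respectively, producing the claimed dictionary.

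The hardest step is the characteristic-class identity $8e(TM)=4p_2(TM)-p_1(TM)^2$ for $\Spin$-manifolds. Although it is a formal consequence of the weight structure of the spin representation of $\Spin$, a more conceptual derivation---say via Chern--Weil theory applied to the parallel Cayley $4$-form $\bm{\Phi}$---requires considerably more care than the brute-force weight computation sketched above. The remaining parallel-spinor bookkeeping, and the identification of the four holonomy groups, are then essentially routine once Ricci-flatness of $\Spin$-metrics and the branching rules for $\Delta^\pm$ under the relevant subgroups are in hand.
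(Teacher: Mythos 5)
This theorem is quoted in the paper directly from Joyce (\cite{J00}, Theorem 10.6.1) without proof, so there is no in-paper argument to compare against; your sketch correctly reproduces the standard proof from Joyce's book. Both halves check out: the weight computation for the spin representation of $\mathrm{Spin}(7)$ does yield $4p_2-p_1^2=8e$, which combined with the degree-$8$ $L$- and $\Ahat$-polynomials gives \eqref{eq:A-hat}, and the Lichnerowicz/index-theorem count of parallel spinors together with the de Rham--Berger classification (where compactness and simple connectivity are what exclude flat factors and hence the remaining subgroups) gives the holonomy dictionary.
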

\section{The gluing procedure}

\subsection{Compact complex manifolds with an anticanonical divisor}\label{section:CMWAD}
We suppose that $\overline{X}$ is a compact complex manifold of dimension $n$,
and $D$ is a smooth irreducible anticanonical divisor on $\overline{X}$.
We recall some results in \cite{D09}, Sections $3.1$--$3.2$, and \cite{DY14}, Sections $3.1$--$3.2$.
\begin{lemma}\label{lem:coords_on_X}
Let $\overline{X}$ and $D$ be as above.
Then there exists a local coordinate system
$\{ U_\alpha ,(z_\alpha^1,\dots ,z_\alpha^{n-1},w_\alpha)\}$ on $\overline{X}$
such that
\renewcommand{\labelenumi}{\theenumi}
\renewcommand{\theenumi}{(\roman{enumi})}
\begin{enumerate}
\item $w_\alpha$ is a local defining function of $D$ on $U_\alpha$,
i.e., $D\cap U_\alpha =\{w_\alpha =0\}$, and
\item the $n$-forms $\displaystyle\Omega_\alpha =
\frac{\der w_\alpha}{w_\alpha}\wedge\der z_\alpha^1\wedge\dots\wedge\der z_\alpha^{n-1}$
on $U_\alpha \setminus D$ together yield a holomorphic volume form $\Omega$ on $X=\overline{X}\setminus D$.
\end{enumerate}
\end{lemma}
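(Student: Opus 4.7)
The plan is to produce a single global meromorphic $n$-form $\Omega$ on $\overline{X}$ with a simple logarithmic pole along $D$, then, in well-chosen local coordinates, force $\Omega$ into the normal form $\Omega_\alpha=\frac{\der w_\alpha}{w_\alpha}\wedge \der z_\alpha^1\wedge\dots\wedge \der z_\alpha^{n-1}$. Once this is done, the compatibility on overlaps is free of charge, because each $\Omega_\alpha$ is by construction the restriction of the one global $\Omega$.

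First, since $D$ is an anticanonical divisor, the line bundle $K_{\overline{X}}\otimes\mathcal{O}_{\overline{X}}(D)$ is holomorphically trivial, so it admits a nowhere-vanishing global section. Dividing by the defining section $s_D\in H^0(\overline{X},\mathcal{O}(D))$ of $D$ yields a meromorphic $n$-form $\Omega$ on $\overline{X}$ which is holomorphic and nowhere vanishing on $X=\overline{X}\setminus D$ and has a simple pole exactly along $D$. Next, given any $x\in\overline{X}$, pick a holomorphic function $w_\alpha$ on a neighborhood $U_\alpha$ of $x$ whose zero locus is $D\cap U_\alpha$ (for $x\in D$ this exists by smoothness of $D$ together with the holomorphic implicit function theorem; for $x\notin D$, take any nowhere-vanishing holomorphic function on a small $U_\alpha$), and complete $w_\alpha$ to a holomorphic coordinate system $(z_\alpha^1,\dots,z_\alpha^{n-1},w_\alpha)$ on $U_\alpha$.

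In these coordinates one can write $\Omega=\frac{h_\alpha}{w_\alpha}\,\der w_\alpha\wedge \der z_\alpha^1\wedge\dots\wedge \der z_\alpha^{n-1}$ for some holomorphic $h_\alpha$ on $U_\alpha$, and $h_\alpha$ is nowhere vanishing because $\Omega$ is a trivialization of $K_{\overline{X}}\otimes\mathcal{O}(D)$. To eliminate $h_\alpha$ I keep $w_\alpha$ fixed and adjust only the transverse coordinates: set $\tilde z_\alpha^1(z_\alpha^1,\dots,z_\alpha^{n-1},w_\alpha)=\int_0^{z_\alpha^1}h_\alpha(w_\alpha,t,z_\alpha^2,\dots,z_\alpha^{n-1})\,\der t$ and $\tilde z_\alpha^i=z_\alpha^i$ for $i\geqslant 2$. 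After shrinking $U_\alpha$ so that $\partial \tilde z_\alpha^1/\partial z_\alpha^1=h_\alpha\neq 0$, the holomorphic inverse function theorem makes $(\tilde z_\alpha^1,\dots,\tilde z_\alpha^{n-1},w_\alpha)$ into a coordinate system. Relabelling the tildes away, a short computation, using that the extra $\der w_\alpha$-terms appearing in $\der \tilde z_\alpha^1$ are annihilated by the leading factor $\der w_\alpha$, gives $\frac{\der w_\alpha}{w_\alpha}\wedge \der \tilde z_\alpha^1\wedge\dots\wedge \der \tilde z_\alpha^{n-1}=\Omega$ on $U_\alpha\setminus D$.

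Covering $\overline{X}$ by such coordinate patches yields the family in the statement, and gluing is automatic since each $\Omega_\alpha$ coincides with $\Omega|_{U_\alpha\setminus D}$. The only non-formal step is the local straightening of the transverse coordinates, which I expect to be the main obstacle; it is essentially a Poincar\'{e}-type lemma for the holomorphic volume form on the slices $\{w_\alpha=\mathrm{const}\}$, handled above by one-variable partial integration together with the holomorphic inverse function theorem.
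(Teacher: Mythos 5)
Your proof is correct and follows essentially the same route as the source: the paper only recalls this lemma from \cite{D09} and \cite{DY14}, where the argument is exactly the one you give — trivialize $K_{\overline{X}}\otimes\mathcal{O}_{\overline{X}}(D)$ to obtain a global meromorphic $n$-form with a simple pole along $D$, write it locally as $h_\alpha\,\frac{\der w_\alpha}{w_\alpha}\wedge\der z_\alpha^1\wedge\dots\wedge\der z_\alpha^{n-1}$, and absorb the unit $h_\alpha$ by a coordinate change in one transverse variable.
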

Next we shall see that $X=\overline{X}\setminus D$ is a cylindrical manifold whose structure is
induced from the holomorphic normal bundle $N=N_{D/\overline{X}}$ to $D$ in $\overline{X}$,
where the definition of cylindrical manifolds is given as follows.
\begin{definition}\label{def:cyl.mfd}\rm
Let $X$ be a noncompact differentiable manifold of real dimension $r$.
Then $X$ is called a {\it cylindrical manifold} or a {\it manifold
with a cylindrical end} if there exists a diffeomorphism $\pi
:X\setminus X_0\longrightarrow\Sigma\times\R_+=\Set{(p,t)|p\in\Sigma
,0<t<\infty}$ for some compact submanifold $X_0$ of dimension $r$
with boundary $\Sigma=\del X_0$. Also, extending $t$ smoothly to $X$ so that
$t\leqslant 0$ on $X\setminus X_0$, we call $t$ a {\it cylindrical
parameter} on $X$.
\end{definition}
Let $(x_\alpha ,y_\alpha)$ be local coordinates on $V_\alpha =U_\alpha\cap D$,
such that $x_\alpha$ is the restriction of $z_\alpha$ to $V_\alpha$ and
$y_\alpha$ is a coordinate in the fiber direction.
Then one can see easily that $\der x_\alpha^1\wedge\dots\wedge\der x_\alpha^{n-1}$ on
$V_\alpha$ together yield a holomorphic volume form $\Omega_D$, which is also called the
{\it Poincar\'{e} residue} of $\Omega$ along $D$.
Let $\Norm{\cdot}$ be the norm of a Hermitian bundle metric on $N$.
We can define a cylindrical parameter $t$ on $N$
by $t=-\frac{1}{2}\log\Norm{s}^2$ for $s\in N\setminus D$.
Then the local coordinates $(z_\alpha ,w_\alpha )$ on $X$ are asymptotic to
the local coordinates $(x_\alpha ,y_\alpha )$ on $N\setminus D$ in the following sense.

\begin{lemma}\label{lem:tub.nbd.thm}
There exists a diffeomorphism $\varphi$ from a neighborhood $V$ of the
zero section of $N$ containing $t^{-1}(\R_+ )$ to a tubular neighborhood $U$ of $D$
in $X$ such that $\varphi$ can be locally written as
\begin{equation*}
\begin{aligned}
z_\alpha &=x_\alpha + O(\norm{y_\alpha}^2)=x_\alpha +O(e^{-t}),\\
w_\alpha &=y_\alpha + O(\norm{y_\alpha}^2)=y_\alpha +O(e^{-t}),
\end{aligned}
\end{equation*}
where we multiply all $z_\alpha$ and $w_\alpha$ by a single constant
to ensure $t^{-1}(\R_+ )\subset V$ if necessary.
\end{lemma}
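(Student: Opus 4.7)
My plan is to build $\varphi$ out of a standard tubular neighborhood construction and then to read off the asymptotic form by Taylor-expanding in the fiber variable of $N$, using the identification of $N$ with the quotient $T\overline{X}|_D / TD$. Concretely, I would fix a Hermitian metric $h$ on a neighborhood of $D$ in $\overline{X}$ whose restriction to the normal direction agrees with the given bundle metric on $N$ used to define $t$, and apply the exponential map of $h$ (equivalently the smooth tubular neighborhood theorem) to obtain a diffeomorphism $\varphi : V_0 \longrightarrow U_0$ from an open neighborhood $V_0$ of the zero section of $N$ onto an open tubular neighborhood $U_0$ of $D$ in $\overline{X}$, satisfying $\varphi|_D = \id_D$ and such that the fiber derivative $d\varphi|_{N_p}$ is the canonical inclusion $N_p \hookrightarrow T_p \overline{X}$ coming from the orthogonal splitting $T_p \overline{X} = T_p D \oplus (T_p D)^{\perp}$ associated with $h$.

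To extract the local form in each chart $U_\alpha$ with coordinates $(z_\alpha, w_\alpha)$, I would Taylor-expand $\varphi$ in the fiber coordinate $y_\alpha$. The relations $\varphi|_D = \id$ and $dw_\alpha \bigl( d\varphi(\partial / \partial y_\alpha) \bigr) = 1$ along $D$ (the latter holding because $dw_\alpha$ annihilates $T_p D$ and pairs to $1$ with the normal direction) would immediately force $w_\alpha \circ \varphi = y_\alpha + O(\norm{y_\alpha}^2)$. For the $z_\alpha$-component the expansion would give $z_\alpha \circ \varphi = x_\alpha + c_\alpha(x_\alpha)\, y_\alpha + O(\norm{y_\alpha}^2)$, where $c_\alpha(x_\alpha)$ measures the $T_p D$-component of $d\varphi(\partial / \partial y_\alpha)$; the linear term is eliminated by choosing $h$ so that $\partial / \partial w_\alpha$ is orthogonal to $T_p D$ on $V_\alpha$ for every $\alpha$, which is arranged locally on each chart and then patched globally by a partition of unity subordinate to $\{V_\alpha\}$.

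The conversion to the $O(e^{-t})$ form is immediate from $\norm{y_\alpha}^2 \sim e^{-2t} \leqslant e^{-t}$ for $t \geqslant 0$. Finally, since $V_0$ contains a set of the form $\{\Norm{s}^2 < \delta\}$ for some $\delta > 0$, rescaling every $z_\alpha$ and $w_\alpha$ by a single small constant $c > 0$ would rescale the bundle metric on $N$ and thereby shift $t$ by $-\log c$, enlarging $\delta$ up to $1$ and guaranteeing $t^{-1}(\R_+) = \{\Norm{s}^2 < 1\} \subset V$.

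I expect the main technical obstacle to be verifying the second-order (as opposed to merely first-order) vanishing of $z_\alpha \circ \varphi - x_\alpha$ in every chart simultaneously: the partition-of-unity construction of $h$ must actually force $c_\alpha(x_\alpha) \equiv 0$ on each $V_\alpha$, not merely up to a further $O(\norm{y_\alpha})$ error. Fortunately, any residual linear term is automatically $O(\norm{y_\alpha}) = O(e^{-t})$, so that the final asymptotic $z_\alpha = x_\alpha + O(e^{-t})$---which is the form actually used in the subsequent analysis---holds unconditionally.
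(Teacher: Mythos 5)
The paper itself offers no proof of this lemma --- it is recalled verbatim from \cite{D09}, Sections 3.1--3.2, and \cite{DY14} --- so I can only judge your argument on its own terms. Your overall strategy (a tubular neighborhood map whose fiber derivative along the zero section realizes the canonical identification $N_p\cong T_p\overline{X}/T_pD$, followed by Taylor expansion in $y_\alpha$) is the right one, and it correctly yields $w_\alpha\circ\varphi=y_\alpha+O(\norm{y_\alpha}^2)$ (here it matters that you use a \emph{Hermitian} metric, so that the chosen complement of $T_pD$ is a complex line and the fiber derivative is complex-linear; with a merely Riemannian splitting you would only get $dw_\alpha(d\varphi(v))=ay+b\bar y$) as well as the weaker $z_\alpha\circ\varphi=x_\alpha+O(\norm{y_\alpha})=x_\alpha+O(e^{-t})$.

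The genuine gap is exactly at the step you flag and then dismiss: killing the linear term $c_\alpha(x_\alpha)y_\alpha$ in \emph{every} chart simultaneously. Your proposed fix --- choose $h$ so that $\partial/\partial w_\alpha\perp T_pD$ on each $V_\alpha$ and patch by a partition of unity --- cannot work in general. On an overlap, writing $w_\beta=h_{\beta\alpha}w_\alpha$, one has at $p\in D$
\begin{equation*}
\frac{\partial}{\partial w_\alpha}\Big|_p=\sum_j\frac{\partial z_\beta^j}{\partial w_\alpha}(p)\,\frac{\partial}{\partial z_\beta^j}\Big|_p+h_{\beta\alpha}(p)\,\frac{\partial}{\partial w_\beta}\Big|_p,
\end{equation*}
and the tangential part $\sum_j(\partial z_\beta^j/\partial w_\alpha)(p)\,\partial/\partial z_\beta^j$ does not vanish for a general atlas as in Lemma 3.1. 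Hence $\mathbb{C}\,\partial/\partial w_\alpha|_p$ and $\mathbb{C}\,\partial/\partial w_\beta|_p$ are \emph{different} complex lines through $p$, and no single inner product can make both of them equal to $(T_pD)^\perp$; moreover, a convex combination of metrics via a partition of unity does not preserve the individual orthogonality conditions even where they are separately achievable. To obtain the stated $z_\alpha=x_\alpha+O(\norm{y_\alpha}^2)$ one must first \emph{normalize the atlas}: fix one smooth complex line subbundle $L\subset T\overline{X}|_D$ complementary to $TD$ and replace $z_\alpha$ by $z_\alpha-a_\alpha(z_\alpha)w_\alpha$ so that $\partial/\partial w_\alpha|_D$ spans $L$ in every chart (checking that the defining-function and volume-form properties of Lemma 3.1 survive), and only then run your exponential-map argument. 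As written, your proof establishes only the $O(e^{-t})$ form of both expansions, which is weaker than the lemma's claim of quadratic vanishing; your closing remark that the weaker form "is what is actually used" may well be true of the subsequent analysis, but it is an admission that the lemma as stated has not been proved.
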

Hence $X$ is a cylindrical manifold with the cylindrical parameter $t$
via the diffeomorphism $\varphi$ given in the above lemma.
In particular, when $H^0(\overline{X},\mathcal{O}_{\overline{X}})=0$ and
$N_{D/\overline{X}}$ is trivial, we have a useful coordinate system near $D$.
\begin{lemma}[\cite{DY14}, Lemma $3.4$]\label{lem:existence_w}
Let $(\overline{X},D)$ be as in Lemma $\ref{lem:coords_on_X}$. If
$H^1(\overline{X},\mathcal{O}_{\overline{X}})=0$ and the normal
bundle $N_{D/\overline{X}}$ is holomorphically trivial, then there
exist an open neighborhood $U_D$ of $D$ and a holomorphic function
$w$ on $U_D$ such that $w$ is a local defining function of $D$ on
$U_D$. Also, we may define the cylindrical parameter $t$ with
$t^{-1}(\R_+)\subset U_D$ by writing the fiber coordinate $y$ of
$N_{D/\overline{X}}$ as $y=\exp (-t-\I\theta )$.
\end{lemma}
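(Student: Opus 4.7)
The plan is to construct $w$ as the quotient of two global holomorphic sections of the line bundle $\mathcal{O}_{\overline{X}}(D)$. I begin from the standard short exact sequence
\[
0\longrightarrow\mathcal{O}_{\overline{X}}\xrightarrow{\,\cdot\, s_D\,}\mathcal{O}_{\overline{X}}(D)\longrightarrow N_{D/\overline{X}}\longrightarrow 0,
\]
obtained by tensoring the ideal sheaf sequence of $D$ with $\mathcal{O}_{\overline{X}}(D)$; here $s_D$ denotes the tautological section of $\mathcal{O}_{\overline{X}}(D)$ whose zero locus is $D$, and the identification $\mathcal{O}_{\overline{X}}(D)|_D\cong N_{D/\overline{X}}$ is the standard one. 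Taking the long exact cohomology sequence and invoking the hypothesis $H^1(\overline{X},\mathcal{O}_{\overline{X}})=0$, I obtain a surjection
\[
H^0(\overline{X},\mathcal{O}_{\overline{X}}(D))\twoheadrightarrow H^0(D,N_{D/\overline{X}}).
\]

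Since $N_{D/\overline{X}}$ is holomorphically trivial, it admits a nowhere-vanishing global section $\sigma\in H^0(D,N_{D/\overline{X}})$. Lifting $\sigma$ through the surjection above yields $\widetilde{\sigma}\in H^0(\overline{X},\mathcal{O}_{\overline{X}}(D))$ with $\widetilde{\sigma}|_D=\sigma$. Because $\widetilde{\sigma}$ is nonvanishing on the compact set $D$, it remains nonvanishing on some open neighborhood $U_D$ of $D$, on which it therefore trivializes $\mathcal{O}_{\overline{X}}(D)$. I then define
\[
w:=s_D/\widetilde{\sigma}\in\mathcal{O}(U_D),
\]
a well-defined holomorphic function on $U_D$ whose zero scheme is precisely $D$ with multiplicity one. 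This $w$ is the desired global local defining function of $D$.

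For the last assertion, the trivializing section $\sigma$ furnishes a global fiber coordinate $y$ on $N_{D/\overline{X}}$. Via the tubular neighborhood diffeomorphism of Lemma \ref{lem:tub.nbd.thm}, $w$ and $y$ agree to leading order along $D$; by adjusting $\varphi$ so that the fiber coordinate coincides with $w$ itself, we may identify $y$ with $w$ on $U_D$. Writing $y=w=\exp(-t-\I\theta)$ then determines $t=-\log|w|$ on $U_D\setminus D$, and after shrinking $U_D$ so that $|w|<1$ on it, we obtain $t^{-1}(\R_+)\subset U_D$ as required.

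The main obstacle is the global lifting of $\sigma$ to a holomorphic section $\widetilde{\sigma}$ of $\mathcal{O}_{\overline{X}}(D)$ that remains nonvanishing along $D$; this is exactly the content of the vanishing $H^1(\overline{X},\mathcal{O}_{\overline{X}})=0$ applied to the above short exact sequence. Once that lift is secured, the remainder of the argument is elementary linear algebra on the line bundle together with a straightforward tubular neighborhood identification.
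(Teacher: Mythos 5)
Your proof is correct and follows essentially the same route as the paper's source (the cited Lemma 3.4 of \cite{DY14}): the ideal-sheaf sequence twisted by $\mathcal{O}_{\overline{X}}(D)$, the surjection $H^0(\overline{X},\mathcal{O}_{\overline{X}}(D))\twoheadrightarrow H^0(D,N_{D/\overline{X}})$ coming from $H^1(\overline{X},\mathcal{O}_{\overline{X}})=0$, lifting a trivializing section of the normal bundle, and taking $w=s_D/\widetilde{\sigma}$ on a neighborhood where the lift is nonvanishing. The concluding identification of $w$ with the fiber coordinate $y$ and the definition $t=-\log|w|$ also match the paper's setup via Lemma \ref{lem:tub.nbd.thm}.
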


\subsection{Admissible pairs and asymptotically cylindrical Ricci-flat K\"{a}hler manifolds}

\begin{definition}\rm
Let $X$ be a cylindrical manifold such that
$\pi :X\setminus X_0\longrightarrow\Sigma\times\R_+=\{(p,t)\}$
is a corresponding diffeomorphism.
If $g_\Sigma$ is a Riemannian metric on $\Sigma$, then it defines a cylindrical metric
$g_{\rm cyl}=g_\Sigma +\der t^2$ on $\Sigma\times\R_+$.
Then a complete Riemannian metric $g$ on $X$ is said to be {\it asymptotically cylindrical}
({\it to }$(\Sigma\times\R_+,g_{\rm cyl})$) if $g$ satisfies 
for some cylindrical metric $g_{\rm cyl}=g_\Sigma +\der t^2$
\begin{equation*}
\norm{\nabla_{g_{\rm cyl}}^j(g-g_{\rm cyl})}_{g_{\rm cyl}}\longrightarrow 0
\qquad\text{as }t\longrightarrow\infty\qquad\text{for all }j\geqslant 0,
\end{equation*}
where we regarded $g_{\rm cyl}$ as a Riemannian metric on $X\setminus X_0$
via the diffeomorphism $\pi$.
Also, we call $(X,g)$ an {\it asymptotically cylindrical manifold} and
$(\Sigma\times\R_+,g_{\rm cyl})$ the {\it asymptotic model} of $(X,g)$.
\end{definition}
\begin{definition}\label{def:admissible}\rm
Let $\overline{X}$ be a complex manifold and $D$ a divisor on $\overline{X}$.
Then $(\overline{X},D)$ is said to be an {\it admissible pair} if the following conditions hold:
\begin{enumerate}
\item[(a)] $\overline{X}$ is a compact K\"ahler manifold.
\item[(b)] $D$ is a smooth anticanonical divisor on $\overline{X}$.
\item[(c)] the normal bundle $N_{D / \overline{X}}$ is trivial.
\item[(d)] $\overline{X}$ and $\overline{X}\setminus D$ are simply-connected.
\end{enumerate}
\end{definition}
From the above conditions, we see that Lemmas $\ref{lem:coords_on_X}$ and
$\ref{lem:existence_w}$ apply to admissible pairs.
Also, from conditions (a) and (b), we see that $D$ is a compact K\"{a}hler manifold
with trivial canonical bundle. Thus $D$ admits a Ricci-flat K\"{a}hler metric.

\begin{theorem}[Tian-Yau \cite{TY90}, Kovalev \cite{K03}, Hein \cite{Hn10}]\label{thm:TYKH}
Let $(\overline{X},\omega' )$ be a compact K\"{a}hler manifold
and $n=\dim_\C\overline{X}$. 
If $(\overline{X},D)$ is an admissible pair, then the following is true.

It follows from Lemmas $\ref{lem:coords_on_X}$ and
$\ref{lem:existence_w}$, there exist a local coordinate system
$(U_{D,\alpha} ,(z_\alpha^1,\dots ,z_\alpha^{n-1},w))$ on a
neighborhood $U_D=\cup_\alpha U_{D,\alpha}$ of $D$ and a holomorphic
volume form $\Omega$ on $\overline{X} \setminus D$ such that
\begin{equation}\label{eq:TYKH_Omega}
\Omega =\frac{\der w}{w}\wedge\der z_\alpha^1\wedge\dots\wedge
\der z_\alpha^{n-1}\qquad\text{on }U_{D,\alpha}\setminus D.
\end{equation}
Let $\kappa_D$ be the unique Ricci-flat K\"{a}hler form on $D$ in the K\"{a}hler class
$[\restrict{\omega'}{D}]$.
Also let $(x_\alpha ,y)$ be local coordinates of $N_{D/\overline{X}}\setminus D$
as in Section $\ref{section:CMWAD}$ and write $y$ as $y=\exp (-t-\I\theta )$.
Now define a holomorphic volume form $\Omega_{\rm cyl}$
and a cylindrical Ricci-flat K\"{a}hler form $\omega_{\rm cyl}$ on $N_{D/\overline{X}}\setminus D$ by
\begin{equation}\label{eq:TYKH_CYcyl}
\begin{aligned}
\Omega_{\rm cyl}&=\frac{\der y}{y}\wedge\der x_\alpha^1\wedge\dots\wedge\der x_\alpha^{n-1}
=(\der t +\I\der\theta)\wedge\Omega_D,\\
\omega_{\rm cyl}&=\kappa_D+\frac{\I}{2}\frac{\der y\wedge\der\overline{y}}{\norm{y}^2}
=\kappa_D+\der t\wedge\der\theta .
\end{aligned}
\end{equation}
Then there exist a holomorphic volume form $\Omega$ and an asymptotically cylindrical Ricci-flat K\"{a}hler form $\omega$ on
$X=\overline{X}\setminus D$ such that
\begin{equation*}
\Omega -\Omega_{\rm cyl}=\der\zeta,\qquad\omega -\omega_{\rm cyl}=\der\xi
\end{equation*}
for some $\zeta$ and $\xi$ with
\begin{equation*}
\norm{\nabla_{g_{\rm cyl}}^j\zeta}_{g_{\rm cyl}}=O(e^{-\beta t}),\qquad
\norm{\nabla_{g_{\rm cyl}}^j\xi}_{g_{\rm cyl}}=O(e^{-\beta t})
\qquad\text{for all }j\geqslant 0\text{ and }\beta\in (0,\min\set{1/2,\sqrt{\lambda_1}}),
\end{equation*}
where $\lambda_1$ is the first eigenvalue of the Laplacian $\Delta_{g_D+\der\theta^2}$
acting on $D\times S^1$ with $g_D$ the metric associated with $\kappa_D$.
\end{theorem}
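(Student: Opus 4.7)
The plan is to reduce existence of the asymptotically cylindrical Ricci-flat K\"ahler form to a complex Monge-Amp\`ere equation on $X=\overline{X}\setminus D$ solved in exponentially weighted function spaces, while producing $\Omega$ directly from the global holomorphic defining function supplied by Lemma \ref{lem:existence_w}.

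First I would construct $\Omega$. Admissibility conditions (c)--(d) together with Lemma \ref{lem:existence_w} give a holomorphic defining function $w$ of $D$ on a neighborhood $U_D$, so the local expression \eqref{eq:TYKH_Omega} patches to a global holomorphic volume form on $X$. The coordinate comparison in Lemma \ref{lem:tub.nbd.thm} yields $w=y+O(e^{-t})$ and $z_\alpha^i=x_\alpha^i+O(e^{-t})$ with all derivatives, hence $\Omega-\Omega_{\mathrm{cyl}}=O(e^{-t})$ in the $g_{\mathrm{cyl}}$-norm; a standard homotopy operator on the cylindrical end integrated from $t=\infty$ then produces a primitive $\zeta$ with the same decay. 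For the K\"ahler side, Yau's theorem on the compact Calabi-Yau manifold $D$ provides the unique Ricci-flat $\kappa_D\in[\omega'|_D]$; transporting $\omega_{\mathrm{cyl}}$ from $N_{D/\overline{X}}\setminus D$ to a neighborhood of $D$ in $X$ via the diffeomorphism $\varphi$ of Lemma \ref{lem:tub.nbd.thm} and gluing to $\omega'$ with a smooth cutoff yields a reference K\"ahler form $\omega_0$ on $X$ with $\omega_0-\omega_{\mathrm{cyl}}=O(e^{-t})$ and $[\omega_0]=[\omega']$.

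The core step is then to solve
\begin{equation*}
(\omega_0+i\partial\overline{\partial}u)^n=c_n\,\Omega\wedge\overline{\Omega}
\end{equation*}
for a function $u$ on $X$ decaying exponentially with all derivatives, where $c_n=(-1)^{n(n-1)/2}(i/2)^n n!$ is the dimensional constant making the two sides agree at infinity to leading order. The discrepancy $F:=\log(c_n\Omega\wedge\overline{\Omega}/\omega_0^n)$ is exponentially small by the previous step, so one runs a continuity method with right-hand side $e^{sF}\omega_0^n$, $s\in[0,1]$. Openness reduces to invertibility of the Laplacian $\Delta_{g_0}:C^{k+2,\alpha}_\beta\to C^{k,\alpha}_\beta$ between exponentially weighted H\"older spaces with norm $\|f\|_{k,\alpha,\beta}=\|e^{\beta t}f\|_{C^{k,\alpha}}$ on the end, while closedness reduces to $C^0$, $C^2$ and higher a priori estimates of Yau type adapted to the asymptotically cylindrical geometry. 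Setting $\omega:=\omega_0+i\partial\overline{\partial}u$, the equation forces $\mathrm{Ric}(\omega)=0$, and the same end-integration as for $\Omega$ gives $\omega-\omega_{\mathrm{cyl}}=d\xi$ with $\xi$ inheriting the decay of $u$.

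The main obstacle is pinning down the admissible decay rate $\beta\in(0,\min\{1/2,\sqrt{\lambda_1}\})$. The asymptotic model operator on $D\times S^1\times\R_+$ is $\partial_t^2+\Delta_{g_D+d\theta^2}$, whose indicial roots on exponentially weighted spaces are $\pm\sqrt{\lambda_j}$ for the eigenvalues $\lambda_j$ of the cross-section Laplacian; Lockhart-McOwen theory then makes $\Delta_{g_0}$ a Fredholm isomorphism on $C^{k+2,\alpha}_\beta\to C^{k,\alpha}_\beta$ precisely for $\beta\in(0,\sqrt{\lambda_1})$, which is the spectral origin of that bound. The competing threshold $1/2$ reflects the effective rate at which the background data $(\omega_0,\Omega)$ converge to $(\omega_{\mathrm{cyl}},\Omega_{\mathrm{cyl}})$ once the cross-sectional mode expansion of $F$ and the nonlinear coupling through $\omega_0^n$ are accounted for, and ultimately dictates the rate inherited by $\omega$. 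Combining this Fredholm theory with the Yau-type uniform estimates closes the continuity method and yields $u$, hence $\omega$, with $|\nabla_{g_{\mathrm{cyl}}}^j(\omega-\omega_{\mathrm{cyl}})|_{g_{\mathrm{cyl}}}=O(e^{-\beta t})$ for every $\beta$ below the stated threshold, completing the argument.
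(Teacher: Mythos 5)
Your outline is correct and follows exactly the route of the cited sources (Tian--Yau, Kovalev, Hein): a reference asymptotically cylindrical K\"ahler form glued from $\omega'$ and $\omega_{\rm cyl}$, a complex Monge--Amp\`ere equation solved by the continuity method in exponentially weighted H\"older spaces, Lockhart--McOwen Fredholm theory for openness and the decay rate, and Yau-type a priori estimates for closedness. The paper itself gives no proof of this theorem --- it is imported verbatim from those references --- so there is nothing to compare beyond noting that your sketch faithfully reconstructs their argument, including the correct normalizing constant and the spectral origin of the threshold $\min\{1/2,\sqrt{\lambda_1}\}$.
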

A pair $(\Omega ,\omega )$ consisting of a holomorphic volume form $\Omega$ and a Ricci-flat K\"{a}hler form $\omega$
on an $n$-dimensional K\"{a}hler manifold
normalized so that
\begin{equation*}
\frac{\omega^n}{n!}=\frac{(\I)^{n^2}}{2^n}\Omega\wedge\overline{\Omega}\;(=\text{the volume form})
\end{equation*}
is called a {\it Calabi-Yau structure}.
The above theorem states that there exists a Calabi-Yau structure $(\Omega ,\omega)$
on $X$ asymptotic to a cylindrical Calabi-Yau structure
$(\Omega_{\rm cyl},\omega_{\rm cyl})$ on $N_{D/\overline{X}}\setminus D$
if we multiply $\Omega$ by some constant.

\subsection{Gluing admissible pairs}
In this subsection we will only consider admissible pairs $(\overline{X},D)$ with
$\dim_\C\overline{X}=4$. Also, we will denote $N=N_{D/\overline{X}}$ and $X=\overline{X}\setminus D$.

\subsubsection{The gluing condition}\label{sec:gluing_cond}

Let $(\overline{X},\omega')$ be a four-dimensional compact K\"{a}hler manifold
and $(\overline{X},D)$ be an admissible pair.
We first define a natural torsion-free $\Spin$-structure on $X$.

It follows from Theorem $\ref{thm:TYKH}$ that there exists a
Calabi-Yau structure $(\Omega ,\omega)$ on $X$ asymptotic to a cylindrical
Calabi-Yau structure $(\Omega_{\rm cyl},\omega_{\rm cyl})$
on $N\setminus D$, which are written as in \eqref{eq:TYKH_Omega}
and \eqref{eq:TYKH_CYcyl}.
We define a $\Spin$-structure $\Phi$ on $X$ by
\begin{equation}\label{eq:Phi}
\Phi =\frac{1}{2}\omega\wedge\omega +\Real\Omega .
\end{equation}
Similarly, we define a $\Spin$-structure $\Phi_{\rm cyl}$
on $N\setminus D$ by
\begin{equation}\label{eq:Phi_cyl}
\Phi_{\rm cyl} =\frac{1}{2}\omega_{\rm cyl}\wedge\omega_{\rm cyl} +\Real\Omega_{\rm cyl}.
\end{equation}
Then we see easily from Theorem $\ref{thm:TYKH}$ and
equations \eqref{eq:Phi} and \eqref{eq:Phi_cyl} that
\begin{equation}\label{eq:difference}
\begin{aligned}
\Phi -\Phi_{\rm cyl}&=\frac{1}{2}\der\xi\wedge (\Omega +\Omega_{\rm cyl}) +\Real\der\zeta
=\der\eta ,\\
\text{where}\quad\eta &=\frac{1}{2}\xi\wedge (\Omega +\Omega_{\rm cyl}) +\Real\zeta .
\end{aligned}
\end{equation}
Thus $\Phi$ and $\Phi_{\rm cyl}$ are both torsion-free $\Spin$-structures,
and $(X, \Phi )$ is asymptotic to
$(N\setminus D,\Phi_{\rm cyl})$.
Note that the cylindrical end of $X$ is diffeomorphic to
$N\setminus D\simeq D\times S^1\times\R_+
=\{(x_\alpha ,\theta ,t)\}$.

Next we consider the condition under which we can glue together
$X_1$ and $X_2$ obtained from admissible pairs
$(\overline{X}_1,D_1)$ and $(\overline{X}_2,D_2)$. For gluing $X_1$
and $X_2$ to obtain a manifold with a $\Spin$-structure with small torsion, 
we would like $(X_1,\Phi_1 )$ and $(X_2,\Phi_2)$ to have the same asymptotic model. 
Thus we put the following
\begin{description}
\item[\it Gluing condition] There exists a diffeomorphism
$F: D_1\times S^1\longrightarrow D_2\times S^1$
between the cross-sections of the cylindrical ends such that
\begin{equation}\label{eq:gluing_condition}
F_T^*\Phi_{2,\rm cyl} =\Phi_{1,\rm cyl}\qquad\text{for all }T>0,
\end{equation}
where $F_T:D_1\times S^1\times (0,2T)\longrightarrow D_2\times S^1\times (0,2T)$
is defined by
\begin{equation*}
F_T(x_1,\theta_1, t)=(F( x_1,\theta_1),2T-t)\qquad\text{for }
(x_1,\theta_1, t)\in D_1\times S^1\times (0,2T) .
\end{equation*}
\end{description}
\begin{lemma}\label{lem:gluing_condition}
Suppose that
there exists an isomorphism $f:D_1\longrightarrow D_2$ such that $f^*\kappa_{D_2}=\kappa_{D_1}$.
If we define a diffeomorphism $F$ between the cross-sections of the cylindrical ends by
\begin{equation*}
F:D_1\times S^1\longrightarrow D_2\times S^1,\qquad 
(x_1,\theta_1 )\longmapsto (x_2,\theta_2 )=(f(x_1),-\theta_1)
\end{equation*}
Then the gluing condition \eqref{eq:gluing_condition} holds,
where we change the sign of $\Omega_{2,\rm cyl}$
(and also the sign of $\Omega_2$ correspondingly).
\end{lemma}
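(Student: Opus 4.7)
The plan is to verify the gluing condition by a direct pullback computation, separately handling the K\"ahler part $\frac{1}{2}\omega_{i,\mathrm{cyl}}\wedge\omega_{i,\mathrm{cyl}}$ and the holomorphic-volume-form part $\Real\Omega_{i,\mathrm{cyl}}$ of $\Phi_{i,\mathrm{cyl}}$.

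First I would record how $F_T$ acts on the coordinates of the second cylindrical end. Writing the target coordinates as $(x_2,\theta_2,t')$ with $t'=2T-t$, the definition of $F_T$ gives
\begin{equation*}
F_T^*x_2=f(x_1),\qquad F_T^*\theta_2=-\theta_1,\qquad F_T^*\der t'=-\der t,\qquad F_T^*\der\theta_2=-\der\theta_1.
\end{equation*}
For the K\"ahler part, using \eqref{eq:TYKH_CYcyl} and the hypothesis $f^*\kappa_{D_2}=\kappa_{D_1}$,
\begin{equation*}
F_T^*\omega_{2,\mathrm{cyl}}=f^*\kappa_{D_2}+(-\der t)\wedge(-\der\theta_1)=\kappa_{D_1}+\der t\wedge\der\theta_1=\omega_{1,\mathrm{cyl}},
\end{equation*}
so $F_T^*(\omega_{2,\mathrm{cyl}}\wedge\omega_{2,\mathrm{cyl}})=\omega_{1,\mathrm{cyl}}\wedge\omega_{1,\mathrm{cyl}}$ with no sign issue.

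The holomorphic-volume-form part is where the minus sign in the statement enters. Pulling back $\Omega_{2,\mathrm{cyl}}=(\der t'+\I\der\theta_2)\wedge\Omega_{D_2}$ produces
\begin{equation*}
F_T^*\Omega_{2,\mathrm{cyl}}=(-\der t-\I\der\theta_1)\wedge f^*\Omega_{D_2}=-(\der t+\I\der\theta_1)\wedge f^*\Omega_{D_2}.
\end{equation*}
Since $f\colon D_1\to D_2$ is a biholomorphism with $f^*\kappa_{D_2}=\kappa_{D_1}$, the pullback $f^*\Omega_{D_2}$ is a holomorphic volume form on $D_1$ satisfying the same Calabi-Yau normalization as $\Omega_{D_1}$; after choosing the implicit unit-modulus constant in $\Omega_{D_2}$ appropriately we may take $f^*\Omega_{D_2}=\Omega_{D_1}$. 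Then the displayed minus sign is absorbed precisely by flipping the sign of $\Omega_{2,\mathrm{cyl}}$ (equivalently of $\Omega_2$), which is consistent with the overall reversal of orientation of the cylindrical parameter $t\mapsto 2T-t$ that $F_T$ implements. After this sign change,
\begin{equation*}
F_T^*\Omega_{2,\mathrm{cyl}}=(\der t+\I\der\theta_1)\wedge\Omega_{D_1}=\Omega_{1,\mathrm{cyl}},
\end{equation*}
so $F_T^*\Real\Omega_{2,\mathrm{cyl}}=\Real\Omega_{1,\mathrm{cyl}}$.

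Combining the two computations with the definition \eqref{eq:Phi_cyl} of $\Phi_{i,\mathrm{cyl}}$ yields $F_T^*\Phi_{2,\mathrm{cyl}}=\Phi_{1,\mathrm{cyl}}$ for every $T>0$, which is the gluing condition \eqref{eq:gluing_condition}. The only real subtlety — and in my view the main conceptual point to get right — is tracking the sign produced by the orientation-reversing map $t\mapsto 2T-t$ on the $\Omega$-side versus its cancellation on the $\omega$-side (there the two minus signs from $\der t'$ and $\der\theta_2$ multiply), and justifying that the residual sign can be absorbed into the ambiguity in $\Omega_2$; the rest is mechanical substitution.
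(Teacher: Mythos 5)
Your computation is correct and is exactly the ``straightforward calculation using \eqref{eq:TYKH_CYcyl} and \eqref{eq:Phi_cyl}'' that the paper's one-line proof invokes: the two minus signs cancel on the $\omega_{\rm cyl}\wedge\omega_{\rm cyl}$ part, the single minus sign on the $(\der t+\I\der\theta)$ factor is absorbed by the stated sign change of $\Omega_{2,\rm cyl}$, and the residual unit-modulus constant in $f^*\Omega_{D_2}$ versus $\Omega_{D_1}$ is absorbed into the normalization of $\Omega_2$. Nothing further is needed.
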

\begin{proof}
It follows by a straightforward calculation
using \eqref{eq:TYKH_CYcyl} and \eqref{eq:Phi_cyl}.
\end{proof}

\subsubsection{$\Spin$-structures with small torsion}\label{section:T-approx}
Now we shall glue $X_1$ and $X_2$ under the gluing condition \eqref{eq:gluing_condition}.
Let $\rho :\R\longrightarrow [0,1]$ denote a cut-off function
\begin{equation*}
\rho (x)=
\begin{cases}
1&\text{if }x\leqslant 0,\\
0&\text{if }x\geqslant 1,
\end{cases}
\end{equation*}
and define $\rho_T :\R\longrightarrow [0,1]$ by
\begin{equation*}
\rho_T (x)=\rho (x-T+1)=
\begin{cases}
1&\text{if }x\leqslant T-1,\\
0&\text{if }x\geqslant T.
\end{cases}
\end{equation*}
Setting an approximating Calabi-Yau structure $(\Omega_{i,T}, \omega_{i,T})$ by
\begin{equation*}
\Omega_{i,T}=
\begin{cases}
\Omega_i -\der (1-\rho_{T-1})\zeta_i &\text{on }\{ t_i\leqslant T-1\} ,\\
\Omega_{i,\rm cyl}+\der\rho_{T-1}\zeta_i &\text{on }\{ t_i\geqslant T-2\}
\end{cases}
\end{equation*}
and similarly
\begin{equation*}
\omega_{i,T}=
\begin{cases}
\omega_i -\der (1-\rho_{T-1})\xi_i &\text{on }\{ t_i\leqslant T-1\} ,\\
\omega_{i,\rm cyl}+\der\rho_{T-1}\xi_i &\text{on }\{ t_i\geqslant T-2\},
\end{cases}
\end{equation*}
we can define a $\der$-closed $4$-form
$\widetilde{\Phi}_{i,T}$ on each $X_i$ by
\begin{equation*}
\widetilde{\Phi}_{i,T}=\frac{1}{2}\omega_{i,T}\wedge\omega_{i,T} +\Real\Omega_T .
\end{equation*}
Note that $\widetilde{\Phi}_{i,T}$ satisfies
\begin{equation*}
\widetilde{\Phi}_{i,T}=
\begin{cases}
\Phi_i&\text{on }\{ t_i<T-2\} ,\\
\Phi_{i,\rm cyl}&\text{on }\{ t_i>T-1\}
\end{cases}
\end{equation*}
and that
\begin{equation}\label{eq:Phi_T-Phi_cyl}
\norm{\widetilde{\Phi}_{i,T} -\Phi_{i,\rm cyl}}_{g_{\Phi_{i,\rm cyl}}}=O(e^{-\beta T})
\qquad\text{for all }\beta\in (0,\min\set{1/2,\sqrt{\lambda_1}}).
\end{equation}

Let $X_{1,T}=\{ t_1<T+1\}\subset X_1$ and $X_{2,T}=\{ t_2<T+1\}\subset X_2$.
We glue $X_{1,T}$ and $X_{2,T}$
along $D_1\times\{ T-1<t_1<T+1\}\subset X_{1,T}$
and $D_2\times\{ T-1<t_2<T+1\}\subset X_{2,T}$
to construct a compact $8$-manifold $M_T$ using the gluing map $F_T$
(more precisely, $\widetilde{F}_T=\varphi_2\circ F_T\circ\varphi_1^{-1}$,
where $\varphi_1$ and $\varphi_2$
are the diffeomorphisms given in Lemma $\ref{lem:tub.nbd.thm}$).
Also, we can glue together $\widetilde{\Phi}_{1,T}$ and $\widetilde{\Phi}_{2,T}$ to obtain a 
$\der$-closed $4$-form $\widetilde{\Phi}_T$ on $M_T$ by Lemma $\ref{lem:gluing_condition}$.
There exists a positive constant $T_*$ such that $\widetilde{\Phi}_T\in C^\infty (\mathcal{T}(M_T))$
for any $T$ with $T>T_*$. 
This $\widetilde{\Phi}_T$ is what was discussed right after Theorem $\ref{thm:d-closed_Spin}$, from which
we can define a $\Spin$-structure $\Phi_T$ {\it with small torsion} by
\begin{equation*}
\Phi_T =\Theta (\widetilde{\Phi}_T).
\end{equation*}
Now let 
\begin{equation}\label{eq:phi_T}
\phi_T = \widetilde{\Phi}_T -\Phi_T. 
\end{equation}
Then $\der\phi_T +\der\Phi_T=0$.
\begin{proposition}\label{prop:estimates}
Let $T>T_*$.
Then there exist constants $A_{p,k,\beta}$
independent of $T$ such that for $\beta\in (0,\min\set{1/2,\sqrt{\lambda_1}})$ we have
\begin{equation*}
\Norm{\phi_T}_{L^p_k}\leqslant A_{p,k,\beta}\, e^{-\beta T},
\end{equation*}
where all norms are measured using $g_{\Phi_T}$.
\end{proposition}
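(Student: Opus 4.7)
The plan is to localize $\phi_T$ to the two transition annuli where the cutoffs in the definitions of $\omega_{i,T}$ and $\Omega_{i,T}$ are active, then apply Lemma~\ref{lem:epsilons} with $\Phi_{i,\mathrm{cyl}}$ as reference $\Spin$-structure in order to split $\phi_T$ into a small linear correction plus a quadratic nonlinearity, and finally bound both pieces using the exponential decay of $\zeta_i$ and $\xi_i$.

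First, I will identify the support of $\phi_T$. On $\{t_i\leqslant T-2\}$ one has $\widetilde{\Phi}_{i,T}=\Phi_i$, and on the overlap $\{t_i\geqslant T-1\}$ the two ends are identified under $F_T$ so that $\widetilde{\Phi}_T=\Phi_{i,\mathrm{cyl}}$ glued to its partner. Both $\Phi_i$ and $\Phi_{i,\mathrm{cyl}}$ already lie in $\mathcal{A}(M_T)$, so $\Theta(\widetilde{\Phi}_T)=\widetilde{\Phi}_T$ and hence $\phi_T\equiv 0$ there. Consequently $\phi_T$ is supported in the two annuli $A_i:=\{T-2\leqslant t_i\leqslant T-1\}$, each of $g_{\Phi_T}$-volume uniformly bounded in $T$. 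On each $A_i$ I set $\eta_i=\widetilde{\Phi}_{i,T}-\Phi_{i,\mathrm{cyl}}$. Expanding the cutoff forms and invoking the decay $\Norm{\nabla^j\xi_i}_{g_{\mathrm{cyl}}},\Norm{\nabla^j\zeta_i}_{g_{\mathrm{cyl}}}=O(e^{-\beta t})$ from Theorem~\ref{thm:TYKH}, I read off $\Norm{\eta_i}_{C^k(A_i)}\leqslant C_{k,\beta}\,e^{-\beta T}$, which together with the bounded volume gives $\Norm{\eta_i}_{L^p_k(A_i)}\leqslant C'_{p,k,\beta}\,e^{-\beta T}$.

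For $T>T_*$ large enough, $\Norm{\eta_i}_{C^0}\leqslant\epsilon_1$, so Lemma~\ref{lem:epsilons} applies and yields
\[\phi_T=\widetilde{\Phi}_T-\Theta(\widetilde{\Phi}_T)=\bigl(\eta_i-p(\eta_i)\bigr)+F(\eta_i)\quad\text{on }A_i,\]
where $\eta_i-p(\eta_i)$ is the pointwise $g_{\Phi_{i,\mathrm{cyl}}}$-orthogonal projection of $\eta_i$ into $T^\perp_{\Phi_{i,\mathrm{cyl}}}\mathcal{A}$, hence bounded pointwise by $|\eta_i|$. The nonlinearity is handled by the estimates in Lemma~\ref{lem:epsilons} with $\eta'=0$, which combined with $\der\Phi_{i,\mathrm{cyl}}=0$ give $|F(\eta_i)|\leqslant\epsilon_2|\eta_i|^2$ and $|\nabla F(\eta_i)|\leqslant 2\epsilon_3|\eta_i|\,|\nabla\eta_i|$. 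To convert from the reference metric to the $g_{\Phi_T}$ prescribed in the statement, I note that $\Phi_T-\Phi_{i,\mathrm{cyl}}=-\phi_T+\eta_i$ is $O(e^{-\beta T})$ in $C^0$, so $g_{\Phi_T}$ and $g_{\Phi_{i,\mathrm{cyl}}}$ are uniformly equivalent on $A_i$ and the $L^p_k$ norms agree up to $T$-independent constants. Summing over $A_1$ and $A_2$ yields the claimed bound.

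The main obstacle is the $L^p_k$-control of $F(\eta_i)$ for $k\geqslant 2$, since Lemma~\ref{lem:epsilons} only furnishes pointwise estimates up to one covariant derivative. I plan to handle this by standard Moser-type nonlinear composition inequalities applied to the smooth map $\eta\mapsto F(\eta)$, which vanishes to second order at the origin; the smallness of $\Norm{\eta_i}_{C^0}$ ensures that each higher-derivative term is dominated by a bounded polynomial in $\Norm{\eta_i}_{C^{k-1}}$ times $\Norm{\eta_i}_{L^p_k}$, hence remains $O(e^{-\beta T})$. The delicate point is to verify that all constants in these composition estimates depend only on the reference cylindrical geometry on $A_i$ and not on $T$, which follows because the Levi-Civita connection of $g_{\Phi_{i,\mathrm{cyl}}}$ on each $A_i$ is isometric to a fixed model over $D_i\times S^1\times[T-2,T-1]$ up to translation.
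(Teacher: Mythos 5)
Your argument is correct and is essentially the detailed version of what the paper does: the paper's proof is a one-line deferral to the decay estimates of Theorem~\ref{thm:TYKH} and equation~\eqref{eq:difference} (citing arguments from an earlier paper), and your localization of $\phi_T$ to the transition annuli, the expansion $\phi_T=(\eta_i-p(\eta_i))+F(\eta_i)$ via Lemma~\ref{lem:epsilons}, and the uniform equivalence of $g_{\Phi_T}$ with the cylindrical reference metric are exactly the mechanisms being invoked. Your candid flagging of the $k\geqslant 2$ issue (Lemma~\ref{lem:epsilons} only controls one derivative of $F$) and the proposed fix via smoothness of $\Theta$ as a fiberwise map over a $T$-independent cylindrical model is the standard and correct way to complete the estimate.
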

\begin{proof}
These estimates follow in a straightforward way from Theorem $\ref{thm:TYKH}$
and equation \eqref{eq:difference}
by arguments similar to those in \cite{D09}, Section 3.5.
\end{proof}

\subsection{Gluing theorems}

\begin{theorem}\label{thm:main}
Let $(\overline{X}_1,\omega'_1)$ and $(\overline{X}_2,\omega'_2)$ be
compact K\"{a}hler manifolds with $\dim_\C\overline{X}_i=4$
such that $(\overline{X}_1,D_1)$ and $(\overline{X}_2,D_2)$ are admissible pairs.
Suppose there exists an isomorphism $f: D_1\longrightarrow D_2$ such that
$f^*\kappa_2=\kappa_1$,
where $\kappa_i$ is the unique Ricci-flat K\"{a}hler form on $D_i$ in the K\"{a}hler class
$[\restrict{\omega'_i}{D_i}]$.
Then we can glue toghether $X_1$ and $X_2$ along their cylindrical ends to obtain
a compact simply-connected $8$-manifold $M$. The manifold $M$ admits a Riemannian metric with holonomy 
contained in $\Spin$. Moreover, if $\Ahat (M)=2$, then $M$ is a Calabi-Yau fourfold, i.e., 
$M$ admits a Ricci-flat K\"{a}hler metric with holonomy $\SU (4)$.
\end{theorem}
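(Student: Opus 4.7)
The plan is to assemble the compact $8$-manifold $M=M_T$ together with an approximate $\Spin$-structure, and then to perturb it to a genuinely torsion-free one using Joyce's analysis. First, by Theorem $\ref{thm:TYKH}$ each $X_i=\overline{X}_i\setminus D_i$ carries an asymptotically cylindrical Calabi--Yau structure $(\Omega_i,\omega_i)$ decaying exponentially to its cylindrical model $(\Omega_{i,\rm cyl},\omega_{i,\rm cyl})$ on $D_i\times S^1\times\R_+$. Formulas \eqref{eq:Phi} and \eqref{eq:Phi_cyl} then produce torsion-free $\Spin$-structures $\Phi_i$ on $X_i$ asymptotic to the cylindrical $\Phi_{i,\rm cyl}$. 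The hypothesis $f^*\kappa_2=\kappa_1$ feeds directly into Lemma $\ref{lem:gluing_condition}$, which yields a diffeomorphism $F:D_1\times S^1\longrightarrow D_2\times S^1$ such that the gluing condition \eqref{eq:gluing_condition} $F_T^*\Phi_{2,\rm cyl}=\Phi_{1,\rm cyl}$ holds (possibly after switching the sign of $\Omega_2$).

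Next I carry out the cut-off and splicing of Section $\ref{section:T-approx}$: using $\rho_{T-1}$ on the primitives $\zeta_i,\xi_i$ of $\Omega_i-\Omega_{i,\rm cyl}$ and $\omega_i-\omega_{i,\rm cyl}$, I form the interpolating data $(\Omega_{i,T},\omega_{i,T})$ and the associated $\der$-closed $4$-form $\widetilde\Phi_{i,T}$, which agrees with $\Phi_i$ on $\{t_i<T-2\}$ and with $\Phi_{i,\rm cyl}$ on $\{t_i>T-1\}$. Identifying the collars $D_i\times S^1\times(T-1,T+1)\subset X_{i,T}$ via $\widetilde F_T=\varphi_2\circ F_T\circ\varphi_1^{-1}$ assembles the compact $8$-manifold $M_T$, and the matching \eqref{eq:gluing_condition} of cylindrical models lets me glue $\widetilde\Phi_{1,T}$ and $\widetilde\Phi_{2,T}$ into a global $\der$-closed $4$-form $\widetilde\Phi_T\in C^\infty(\mathcal{T}(M_T))$ for $T>T_*$. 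Projecting by $\Theta$ gives a $\Spin$-structure $\Phi_T=\Theta(\widetilde\Phi_T)$ whose torsion $\phi_T=\widetilde\Phi_T-\Phi_T$ satisfies the exponentially decaying $L^p_k$-bounds of Proposition $\ref{prop:estimates}$.

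The hard step is the nonlinear deformation to a torsion-free structure. One needs to solve the perturbation equation $\der\eta=\der\phi_T+\der F(\eta)$ for $\eta\in C^\infty(\wedge^4_-T^*M_T)$, so that $\Theta(\Phi_T+\eta)$ is $\der$-closed and hence, by Salamon's Theorem $\ref{thm:d-closed_Spin}$, torsion-free. This is exactly the setting of Joyce's existence theorem (Theorem 10.6.1 of \cite{J00}, cited as Theorem $\ref{thm:Spin_existence}$ in the sequel), whose smallness hypotheses are satisfied by Proposition $\ref{prop:estimates}$ for all sufficiently large $T$; the main obstacle is therefore the verification of the precise analytic norms needed for Joyce's iteration, but these follow from the exponential decay built into Theorem $\ref{thm:TYKH}$. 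The resulting metric $g=g_{\Theta(\Phi_T+\eta)}$ then satisfies $\Hol(g)\subseteq\Spin$. Simple-connectedness of $M$ follows from van Kampen applied to the cover by $X_{1,T}$ and $X_{2,T}$, both of which deformation retract to $\overline{X}_i\setminus D_i$ and are hence simply-connected by condition (d) of Definition $\ref{def:admissible}$; triviality of both factors forces $\pi_1(M)=1$ regardless of the overlap.

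Finally, assume $\Ahat(M)=2$. Since $M$ is a compact simply-connected Riemannian $8$-manifold with $\Hol(g)\subseteq\Spin$, Theorem $\ref{thm:A-hat}$ forces $\Hol(g)=\SU(4)$, so $g$ is a Ricci-flat K\"ahler metric with holonomy exactly $\SU(4)$; thus $M$ is a Calabi--Yau fourfold, completing the proof.
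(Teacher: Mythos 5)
Your proposal is correct and follows essentially the same route as the paper: assemble $M_T$ and the $\der$-closed form $\widetilde\Phi_T$ via the gluing condition from Lemma $\ref{lem:gluing_condition}$, use Proposition $\ref{prop:estimates}$ to verify the smallness hypotheses of Joyce's existence theorem (Theorem $\ref{thm:Spin_existence}$ — note this is Theorem 13.6.1 of \cite{J00}, not 10.6.1, which is the $\Ahat$-genus result), and then invoke Theorem $\ref{thm:A-hat}$ for the $\Ahat(M)=2$ conclusion. Your added van Kampen remark for simple-connectedness is a harmless elaboration of what the paper asserts without comment.
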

\begin{corollary}\label{cor:doubling}
Let $(\overline{X},D)$ be an admissible pair with $\dim_\C\overline{X}=4$.
Then we can glue two copies of $X$ along their cylindrical ends to obtain a compact simply-connected 
$8$-manifold $M$. Then $M$ admits a Riemannian metric with holonomy contained in $\Spin$.
If $\Ahat (M)=2$, then the manifold $M$ is a Calabi-Yau fourfold.
\end{corollary}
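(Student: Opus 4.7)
The plan is to deduce this directly from Theorem \ref{thm:main} by observing that, when the two admissible pairs coincide, every hypothesis of that theorem is satisfied tautologically. First I would take $(\overline{X}_1, D_1) = (\overline{X}_2, D_2) = (\overline{X}, D)$, with a common K\"{a}hler form $\omega'$, and choose the required isomorphism $f : D_1 \longrightarrow D_2$ to be the identity map $\mathrm{id}_D$. Since $\kappa_1$ and $\kappa_2$ are both the unique Ricci-flat K\"{a}hler form on $D$ representing the class $[\restrict{\omega'}{D}]$, they coincide, so $f^*\kappa_2 = \kappa_1$ is automatic. By Lemma \ref{lem:gluing_condition} the induced diffeomorphism $F(x,\theta) = (x, -\theta)$ then verifies the gluing condition \eqref{eq:gluing_condition}, after changing the sign of $\Omega_{2,\mathrm{cyl}}$ (equivalently, of $\Omega_2$), which leaves the underlying Calabi-Yau structure intact.

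With the gluing condition established, Theorem \ref{thm:main} immediately produces a compact $8$-manifold $M$, obtained by gluing two copies of $X = \overline{X} \setminus D$ along their cylindrical ends, together with a Riemannian metric $g$ having $\Hol(g) \subseteq \Spin$. To confirm that $M$ is simply-connected I would apply van Kampen's theorem to the open cover $M = U_1 \cup U_2$, where each $U_i$ is a tubular open thickening of the corresponding copy of $\overline{X}\setminus D$ inside $M$, with intersection $U_1 \cap U_2$ deformation retracting onto $D \times S^1$. By condition (d) of Definition \ref{def:admissible}, $\overline{X}\setminus D$ is simply-connected, so each $\pi_1(U_i)$ is trivial; the amalgamation then forces $\pi_1(M) = 1$ irrespective of the map induced on $\pi_1(U_1\cap U_2)$.

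The final assertion is then immediate from Theorem \ref{thm:A-hat}: because $M$ is a compact simply-connected $8$-manifold with $\Hol(g) \subseteq \Spin$, the hypothesis $\Ahat(M) = 2$ forces $\Hol(g) = \SU(4)$, and the associated metric is a Ricci-flat K\"{a}hler metric making $M$ a Calabi-Yau fourfold. There is no serious obstacle here: all of the analytic content, namely the Tian-Yau-Kovalev-Hein construction of the asymptotically cylindrical Calabi-Yau structure, Joyce's deformation of a $\Spin$-structure with small torsion to a torsion-free one, and the holonomy dichotomy of Theorem \ref{thm:A-hat}, has been packaged into the two theorems we are invoking. The only genuinely new verification is the triviality of the gluing condition in the doubled setting and the $\pi_1$ computation, both of which are essentially formal.
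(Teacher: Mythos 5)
Your proposal is correct and follows exactly the route the paper intends: the corollary is the specialization of Theorem \ref{thm:main} to two identical pairs, where taking $f=\id_D$ makes $f^*\kappa_2=\kappa_1$ automatic, Lemma \ref{lem:gluing_condition} then supplies the gluing condition, and Theorem \ref{thm:A-hat} gives the holonomy conclusion. Your van Kampen verification of simple-connectedness is a sound (and welcome) expansion of a point the paper leaves implicit in the statement of Theorem \ref{thm:main}.
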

\begin{proof}[Proof of Theorem $\ref{thm:main}$]
The assertion for $\Ahat(M)=2$ in Theorem $\ref{thm:main}$ follows directly from Theorem $\ref{thm:A-hat}$.
Thus it remains to prove the existence of a torsion-free $\Spin$-structure on $M_T$
for sufficiently large $T$. 
This is a consequence of the following
\begin{theorem}[Joyce \cite{J00}, Theorem $13.6.1$]\label{thm:Spin_existence}
Let $\lambda ,\mu ,\nu$ be positive constants. Then there exists a positive constant
$\epsilon_*$ such that whenever $0<\epsilon<\epsilon_*$, the following is true.\\
Let $M$ be a compact $8$-manifold and $\Phi$ a $\Spin$-structure on $M$. 
Suppose $\phi$ is a smooth $4$-form on $M$ with $\der\Phi +\der\phi=0$,
and
\begin{enumerate}
\item $\Norm{\phi}_{L^2}\leqslant\lambda\epsilon^{13/3}$
and $\Norm{\der\phi}_{L^{10}}\leqslant\lambda\epsilon^{7/5}$,
\item the injectivity radius $\delta (g)$ satisfies $\delta (g)\geqslant\mu\epsilon$, and
\item the Riemann curvature $R(g)$ satisfies $\Norm{R(g)}_{C^0}\leqslant\nu\epsilon^{-2}$.
\end{enumerate}
Let $\epsilon_1$ be as in Lemma $\ref{lem:epsilons}$.
Then there exists $\eta\in C^\infty (\wedge^4T^*_-M)$ with $\Norm{\eta}_{C^0}<\epsilon_1$
such that $\der\Theta (\Phi +\eta )=0$. Hence the manifold $M$ admits a torsion-free
$\Spin$-structure $\Theta (\Phi +\eta)$.
\end{theorem}
Since $X_1$ and $X_2$ are cylindrical, the injectivity radius and Riemann curvature of
$M_T$ are uniformly bounded from below and above respectively,
conditions (2) and (3) hold for sufficiently large $T$.

For condition (1), we set $\phi =\phi_T$ by equation \eqref{eq:phi_T} for $T>T_*$.
Choosing $\gamma$ so that $0<\gamma <\frac{3}{13}\min\set{1/2,\sqrt{\lambda_1}}$
and letting $\epsilon =e^{-\gamma T}$,
we see from Proposition $\ref{prop:estimates}$ that condition (1) holds for some $\lambda$.
Thus we can apply Theorem $\ref{thm:Spin_existence}$ to prove that $\Phi_T$
can be deformed into a torsion-free $\Spin$-structure for sufficiently large $T$.
This completes the proof of Theorem $\ref{thm:main}$.
\end{proof}

\section{Some results from toric geometry}\label{sec:ToricGeom}
In this section we give a quick review of some related results from toric geometry.
A good reference for the contents of this section is \cite{CLS}.

\subsection{GIT construction of toric varieties}\label{subsec:GIT}
Let $\bM$ be a lattice of rank $n$, $\bN=\Hom (\bM,\Z)$ the $\Z$-dual of $\bM$.
Let $\bM_{\R}$ (resp. $\bN_{\R}$) denote the $\R$-vector space $\bM\otimes_{\Z}\R$ (resp. $\bN\otimes_{\Z}\R$).
Let $\D$ be a fan in $\bN_{\R}$, and $\P_{\D}$ the associated toric variety. Let $\D(1)$ denote the set of
the $1$-dimensional cones of $\D$. To begin with, we shall construct $\P_{\D}$ as a GIT quotient
\[
\P_{\D}\cong (\C^r\setminus Z(\D))\quot G
\]
for an appropriate reductive group $G$, an affine space $\C^r$ and an exceptional set $Z(\D)\subseteq \C^r$ with
$r=\norm{\D(1)}$. Throughout this section, we only consider $n$-dimensional complete
 toric varieties with no torus factors.
For more details, see \cite{Cox95}, \cite{CLS}, Chapter $5$, and \cite{Dolga02}, Chapter $12$. 

Let $\bbT_{\bN}=\bN\otimes_{\Z}\C^*$ be the algebraic torus acting on $\P_{\D}$. The \emph{Orbit-Cone Correspondence} gives
a bijective correspondence between each $\rho \in \D(1)$ and an irreducible $\bbT_{\bN}$-invariant Weil divisor $D_{\rho}$
on $\P_{\D}$. It is well-known that the $\bbT_{\bN}$-invariant Weil divisors on $\P_{\D}$ form a free abelian group, which is
denoted by $\Z^{\D(1)}$. Let $\Div_\bbT (\P_{\D})$ denote the set of all $\bbT_{\bN}$-invariant Cartier divisors on 
$\P_{\D}$. Then $\Div_\bbT (\P_{\D})$ is a subgroup of $\Z^{\D(1)}$. Each $\rho\in \D(1)$ is given by the minimal generator
$u_{\rho}\in \rho\cap \bN$. Recall that $m\in \bM$ gives a character $\chi^m: \bbT_{\bN}\longrightarrow \C^*$ which is a rational function on
$\P_{\D}$, and its divisor is given by $\divisor(\chi^m)=\sum_{\rho}\braket{m,u_{\rho}} D_{\rho}$. In particular, any
divisor $D\in \Z^{\D(1)}$ has the form $D=\sum_{\rho}a_{\rho}D_{\rho}$. Let $\left[\sum_{\rho}a_{\rho}D_{\rho}\right]$ denote its divisor
class in the Chow group $A_{n-1}(\P_{\D})$. Then we have a commutative diagram
\begin{equation}\label{diagram:Ful}
\begin{split}
\xymatrix{
 0\sp \ar@{>}[r] &\sp \bM  \ar@{=}[d] \sp \ar@{>}[r] & \sp \Div_\bbT (\P_{\D}) \ar@{>}[d] \sp \ar@{>}[r] 
 & \sp \Pic(\P_{\D}) \ar@{>}[d] \sp \ar@{>}[r] &\sp  0\\
 0\sp \ar@{>}[r] &\sp \bM \sp \ar@{>}[r]^-{ f} &\sp\sp \Z^{\D(1)} \sp\sp \ar@{>}[r]^-{g} &\sp A_{n-1}(\P_{\D})\sp \ar@{>}[r] &\sp 0
  }
  \end{split}
\end{equation}
by \cite{Ful93}, p. $63$, where maps $f$ and $g$ are defined by
\[
f: \bM \longrightarrow \Z^{\D(1)}, \qquad m\longmapsto D_m=\sum_{\rho\in \D(1)}\braket{m, u_{\rho}} D_{\rho}
\]
and
\[
g: \Z^{\D(1)} \longrightarrow A_{n-1}(\P_{\D}), \qquad a=(a_{\rho})\longmapsto  \left[\sum_{\rho}a_{\rho}D_{\rho}\right]
\]
respectively. Note that the rows are exact and the vertical arrows are inclusion maps in \eqref{diagram:Ful}.
Since $\Hom_{\Z}(\;\cdot\; ,\C^*)$ is left-exact and $\C^*$ is divisible, the bottom row in \eqref{diagram:Ful} induces an exact
sequence of affine algebraic groups
\begin{equation}\label{seq:alg-gr}
0 \longrightarrow G \longrightarrow  (\C^*)^{\Delta(1)} \longrightarrow  \bbT_{\bN} \longrightarrow  1,
\end{equation}
where $G=\Hom_{\Z}(A_{n-1}(\P_{\D}),\C^*)$ and $(\C^*)^{\D(1)}=\Hom_{\Z}(\Z^{\D(1)},\C^*)$.
We note that $A_{n-1}(\P_{\D})$ is the character group of $G$ (see \cite{CLS}, p. $206$).
Introducing a variable $x_{\rho}$ for each $\rho \in \D(1)$, we define the total coordinate ring $S(\D)$ of $\P_{\D}$ by 
\[
S(\D)=\C[x_{\rho}\mid \rho \in \D(1)].
\]
Then $\Spec(S(\D))$ is an affine space $\C^{\D(1)}$.

A subset $C\subseteq \D(1)$ is said to be a \emph{primitive collection} if the following conditions hold:
\begin{enumerate}
\item $C\nsubseteq \sigma(1)$ for all $\sigma\in \D$.
\item For every proper subset $C'\subsetneq C$, there is $\sigma \in \D$ such that $C'\subseteq \sigma(1)$.
\end{enumerate}
Let $\PC(\D)$ denote the set of all primitive collections of $\D$. For a given primitive collection 
$C\in \PC(\D)$, we consider the subspace of $\C^{\D(1)}$
\[
\C^{\D(1)}\supseteq {\mathbf V}(x_{\rho} \mid \rho\in C) = \Set{ (x_{\rho})\in \C^{\D(1)} | x_{\rho}=0
\text{\;\;if\;\;} \rho\in C }.
\]
Then the union of ${\mathbf V}(x_{\rho} \mid \rho \in C)$ over all primitive collections gives the variety
\[
Z(\D)=\bigcup_{C \in \PC(\D)}{\mathbf V}(x_{\rho} \mid \rho \in C)
\]
in $\C^{\D(1)}$.
Observe that $(\C^*)^{\D(1)}$ acts diagonally on $\C^{\D(1)}$. This induces an action on $\C^{\D(1)}\setminus Z(\D)$ and
hence $G\subseteq (\C^*)^{\D(1)}$ acts on $\C^{\D(1)}\setminus Z(\D)$. In \cite{Cox95}, Cox gave the quotient construction
of toric varieties, i.e., $\P_{\D}$ is an (almost) geometric quotient for the action of $G$ on $\C^{\D(1)}\setminus Z(\D)$.
Thus we have
\begin{equation}\label{iso:quot}
\P_{\D}\cong (\C^{\D(1)}\setminus Z(\D))\quot G.
\end{equation}
Then \eqref{seq:alg-gr} and \eqref{iso:quot} induce a commutative diagram
\begin{equation}\label{diagram:toric}
\begin{split}
\xymatrix{
 \bbT_{\bN}\ar[d]\ar[r]^-\cong  & (\C^*)^{\D(1)}/G\ar[d]\\
 \P_{\D}\ar[r]^-\cong & (\C^{\D(1)}\setminus Z(\D))\quot G
}
\end{split}
\end{equation}
where the vertical arrows are inclusion maps. Diagram \eqref{diagram:toric} is consistent with the usual definition of toric varieties:
a toric variety $X$ is a normal irreducible algebraic variety containing a torus 
$\bbT_{\bN}$ as a Zariski open subset such that the action
of $\bbT_{\bN}\cong (\C^*)^n$ on itself extends to an algebraic action of $\bbT_{\bN}$ on $X$.

\subsection{The grading of $S(\D)$}\label{subsec:grading}
The homogeneous coordinate ring $S(\D)$ of an $n$-dimensional toric variety $\P_{\D}$ has a natural grading by the Chow group
$A_{n-1}(\P_{\D})$. In the bottom exact sequence of \eqref{diagram:Ful}, $a=(a_{\rho})\in \Z^{\D(1)}$ maps to the divisor class
$\left[\sum_{\rho}a_{\rho}D_{\rho}\right]\in A_{n-1}(\P_{\D})$. For a given monomial $x^a=\prod_{\rho}x_{\rho}^{a_{\rho}}\in S(\D)$, the degree of $x^a$ is 
\[
\deg (x^a)=\left[\sum_{\rho}a_{\rho}D_{\rho}\right]\in A_{n-1}(\P_{\D}).
\]
We denote $S(\D)_{\alpha}$ the corresponding graded piece of $S(\D)$ for $\alpha \in A_{n-1}(\P_{\D})$. Since $A_{n-1}(\P_{\D})$
is the character group of $G=\Hom_{\Z}(A_{n-1}(\P_{\D}),\C^*)$, $\alpha \in A_{n-1}(\P_{\D})$ gives the character 
$\chi^{\alpha}:G\longrightarrow \C^*$. Then the action of $G$ on $\C^{\D(1)}$ induces an action of $G$ on $S(\D)$ through the 
character $\chi^{\alpha}$. A polynomial $f\in S(\D)_{\alpha}$ is said to be \emph{$G$-homogeneous} of degree $\alpha$.

\subsection{Cohomology of toric complete intersections}
Hereafter we assume $\P_{\D}$ is a \emph{smooth} complete toric variety. Then it is easy to compute the cohomology
ring of $\P_{\D}$ as follows.

Let us fix an order of the rays $\rho_1,\dots,\rho_r$ in $\D(1)$. As in Section $\ref{subsec:GIT}$, for a given ray $\rho_i\in \D(1)$,
let $u_i$ denote the minimal generator of $\rho_i$ and $x_i$ the corresponding variable. The \emph{Stanley-Reisner ideal}
of $\D$ is the squarefree monomial ideal
\[
\mathscr{I}_{\D}=\braket{x_{i_1}\cdots x_{i_{\ell}}\mid i_j \text{ are distinct and } \Cone(u_{i_1},\dots, u_{i_{\ell}}) \notin \D}
\] 
in the ring $\Z[x_1,\dots,x_r]$. Note that $\PC(\D)$ generates $\mathscr{I}_{\D}$.
On the other hand, we consider the ideal $\mathcal{J}_{\D}$ generated by linear combinations
\[
\sum_{i=1}^r \braket{m, u_i}x_i,
\]
where $m$ runs over some basis of $\bM$. Since the ideal $\mathscr{I}_{\D}+\mathcal{J}_{\D}$ is homogeneous in 
$\Z[x_1,\dots,x_r]$
with respect to the grading defined in Section $\ref{subsec:grading}$, the quotient 
$R_{\D}=\Z[x_1,\dots,x_r]/(\mathscr{I}_{\D}+\mathcal{J}_{\D})$ 
is a graded ring. If we coonsider the ring structure determined by the cup product on
\begin{equation*}
H^{\bullet}(\P_{\D},\Z)=\bigoplus_{k=0}^{2n}H^k(\P_{\D},\Z), \qquad n=\dim_{\C}\P_{\D},
\end{equation*}
then we can show that $H^{\bullet}(\P_{\D},\Z)$ is isomorphic to $R_{\D}$.
\begin{proposition}[Jurkiewicz-Danilov]\label{prop:cohomology}
Let $\P_{\D}$ be a smooth complete toric variety. Then the map
\[
R_{\D} \longrightarrow H^{\bullet}(\P_{\D},\Z), \qquad x_i \longmapsto [D_{\rho_i}]
\]
induces a ring isomorphism $R_{\D}\cong H^{\bullet}(\P_{\D},\Z)$.
\end{proposition}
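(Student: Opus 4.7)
The statement is the classical Jurkiewicz--Danilov theorem, so the plan is to verify it along the standard lines rather than invent a new route. I will proceed in three stages: well-definedness, surjectivity, and then a dimension count that forces injectivity.

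First, I would check that the map $\Z[x_1,\dots,x_r]\longrightarrow H^{\bullet}(\P_{\D},\Z)$, $x_i\longmapsto [D_{\rho_i}]$, descends to $R_{\D}$. For the linear ideal $\mathcal{J}_{\D}$: for every $m\in\bM$ the character $\chi^m$ is a nonvanishing rational function on $\P_{\D}$ whose divisor is $\sum_{i}\langle m,u_i\rangle D_{\rho_i}$; being principal, its class vanishes in $A_{n-1}(\P_{\D})$ and hence in $H^{2}(\P_{\D},\Z)$ via the cycle-class map. For the Stanley--Reisner ideal $\mathscr{I}_{\D}$: if $u_{i_1},\dots,u_{i_\ell}$ do not span a cone of $\D$, then the torus-invariant subvarieties $D_{\rho_{i_1}},\dots,D_{\rho_{i_\ell}}$ have empty intersection by the Orbit-Cone Correspondence, so their cup product vanishes by transversality of the invariant strata. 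This shows the map factors through $R_{\D}$ and is a ring homomorphism.

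Next I would establish surjectivity onto $H^{\bullet}(\P_{\D},\Z)$. The standard device is a generic one-parameter subgroup $\lambda:\C^{*}\to\bbT_{\bN}$ whose fixed locus is exactly the finite set of $\bbT_{\bN}$-fixed points of $\P_{\D}$, indexed by the maximal cones of $\D$. Bia\l{}ynicki-Birula decomposition yields a cell decomposition of $\P_{\D}$ with cells only in even (real) dimensions; thus $H^{\mathrm{odd}}(\P_{\D},\Z)=0$ and $H^{\bullet}(\P_{\D},\Z)$ is free abelian with a basis given by closures of cells, which are torus-invariant subvarieties. Each such invariant subvariety $V(\sigma)$ of codimension $k$ is the transverse intersection of the $D_{\rho}$ for $\rho\in\sigma(1)$, so its class equals $\prod_{\rho\in\sigma(1)}[D_{\rho}]$, proving surjectivity of $R_{\D}\longrightarrow H^{\bullet}(\P_{\D},\Z)$.

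Finally, the main obstacle is injectivity, which I would handle by matching Hilbert series degree by degree. The left-hand side $R_{\D}=\Z[x_1,\dots,x_r]/(\mathscr{I}_{\D}+\mathcal{J}_{\D})$ is the quotient of the Stanley--Reisner ring $\Z[x_1,\dots,x_r]/\mathscr{I}_{\D}$ by a linear system of parameters (since $\D$ is complete and $\P_{\D}$ smooth, $\{u_i\}$ span $\bN_{\R}$ and any $\Z$-basis $m_1,\dots,m_n$ of $\bM$ gives a regular sequence on the Cohen--Macaulay Stanley--Reisner ring of the simplicial fan). Consequently $\dim_{\Q}(R_{\D}\otimes\Q)_k=h_k$, the $k$th entry of the $h$-vector of $\D$. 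On the topological side, the cell decomposition above gives $\mathrm{rank}\,H^{2k}(\P_{\D},\Z)=d_k$, where $d_k$ is the number of maximal cones $\sigma$ such that $\lambda$ flows to the fixed point $V(\sigma)$ along $k$ attracting directions; a standard combinatorial identification of this count with $h_k$ (Stanley's proof of the Dehn--Sommerville relations) finishes the match of ranks. Since surjectivity is already established and both sides are finitely generated free abelian groups of the same rank in each degree, the map is an isomorphism. The most delicate point, and the step I expect to require the most care if one wants to give a self-contained proof rather than cite Danilov, is verifying that the linear forms $\sum_{i}\langle m_j,u_i\rangle x_i$ form a regular sequence on $\Z[x_1,\dots,x_r]/\mathscr{I}_{\D}$; this is where completeness of $\D$ and smoothness of $\P_{\D}$ enter in an essential way.
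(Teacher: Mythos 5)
The paper offers no proof of this proposition: it is quoted as the classical Jurkiewicz--Danilov theorem, with \cite{CLS} given as the reference for the whole section. So there is no in-paper argument to compare against; what you have written is a sketch of the standard Danilov/Fulton proof (well-definedness from principal divisors and empty intersections of invariant divisors, surjectivity from the Bia\l{}ynicki--Birula decomposition and the identity $[V(\sigma)]=\prod_{\rho\in\sigma(1)}[D_\rho]$, injectivity from a count against the $h$-vector), and in outline that is the accepted route.

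One step, however, is genuinely incomplete as written: the passage from the rational rank count to the \emph{integral} isomorphism. The regular-sequence/Cohen--Macaulay argument gives $\dim_{\Q}(R_{\Delta}\otimes\Q)_k=h_k=\rank H^{2k}(\P_{\Delta},\Z)$, so the kernel of your surjection is torsion; but you then assert that $(R_{\Delta})_k$ is a finitely generated \emph{free} abelian group, which is precisely what needs to be proved (for merely simplicial complete fans it can fail, and the theorem then only holds over $\Q$). The standard repair is to show directly that $(R_{\Delta})_k$ is generated over $\Z$ by $h_k$ monomials: order the maximal cones by the shelling induced by your generic one-parameter subgroup, and use the unimodularity of each maximal cone (this is where smoothness enters integrally) to rewrite any monomial as a $\Z$-combination of the chosen ones via the relations in $\mathcal{J}_{\Delta}$. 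A surjection from an abelian group generated by $h_k$ elements onto a free abelian group of rank $h_k$ is automatically an isomorphism, and freeness of $(R_{\Delta})_k$ comes out as a byproduct rather than being assumed. So the delicate point is not only the regular-sequence claim you flag at the end, but the integral generation statement; without it your argument proves the isomorphism only after tensoring with $\Q$.
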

Now we can compute the Chern classes of smooth complete toric varieties using the following results.
\begin{proposition}[\cite{CLS}, Proposition $13.1.2$]\label{prop:chern}
Let $\P_{\D}$ be a smooth complete toric variety. Then we have
\begin{itemize}
\item[(i)] $c(\P_{\D})=\prod_{\rho \in \D(1)}\left(1+[D_{\rho}]\right)$.
\item[(ii)] $c_1(\P_{\D})=\left[\sum_{\rho\in \Delta(1)}D_{\rho}\right]=[-K_{\P_{\D}}]$,
\end{itemize}
where $-K_{\P_{\D}}=\sum_{\rho\in \Delta(1)}D_{\rho}$ is a torus-invariant anticanonical divisor on $\P_{\D}$.
\end{proposition}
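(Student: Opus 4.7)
The plan is to derive both parts from the \emph{generalized Euler sequence} for a smooth complete toric variety without torus factors. Dualizing the classical Euler-type sequence
\begin{equation*}
0 \longrightarrow \Omega^1_{\P_\D} \longrightarrow \bigoplus_{\rho \in \D(1)} \mathcal{O}_{\P_\D}(-D_\rho) \longrightarrow \Pic(\P_\D) \otimes_\Z \mathcal{O}_{\P_\D} \longrightarrow 0,
\end{equation*}
one obtains
\begin{equation*}
0 \longrightarrow \Pic(\P_\D)^\vee \otimes_\Z \mathcal{O}_{\P_\D} \longrightarrow \bigoplus_{\rho \in \D(1)} \mathcal{O}_{\P_\D}(D_\rho) \longrightarrow T_{\P_\D} \longrightarrow 0,
\end{equation*}
in which the left-hand term is a trivial bundle of rank $r - n$ for $r = |\D(1)|$. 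First I would establish this sequence via the GIT construction of Section \ref{subsec:GIT}. Because $\P_\D \cong (\C^{\D(1)} \setminus Z(\D)) \quot G$ is an (almost) geometric quotient and $G$ acts with finite stabilizers on the semistable locus, the tangent bundle of $\P_\D$ is the $G$-equivariant quotient of the restriction of $T\C^{\D(1)}$, whose $\rho$-th summand descends to $\mathcal{O}_{\P_\D}(D_\rho)$ under the grading of Section \ref{subsec:grading}, modulo the trivial subbundle of rank $\dim G = r - n$ spanned by the fundamental vector fields of the $G$-action; the latter is identified with $\Pic(\P_\D)^\vee \otimes \mathcal{O}_{\P_\D}$ through the dual of the bottom row of diagram \eqref{diagram:Ful} tensored with $\mathcal{O}_{\P_\D}$.

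Once the Euler sequence is in hand, multiplicativity of the total Chern class gives
\begin{equation*}
c(T_{\P_\D}) \cdot c\bigl(\Pic(\P_\D)^\vee \otimes \mathcal{O}_{\P_\D}\bigr) = \prod_{\rho \in \D(1)} c\bigl(\mathcal{O}_{\P_\D}(D_\rho)\bigr).
\end{equation*}
The left trivial factor contributes $1$, and each line bundle contributes $1 + [D_\rho]$ via the Jurkiewicz--Danilov identification of divisor classes with cohomology classes provided by Proposition \ref{prop:cohomology}. This establishes (i). For (ii), extracting the degree-one component yields $c_1(\P_\D) = \sum_\rho [D_\rho]$, and its identification with $[-K_{\P_\D}]$ follows from the well-known toric description $K_{\P_\D} = -\sum_\rho D_\rho$, which is obtained by computing the divisor of the invariant meromorphic top form $\der\log x_1 \wedge \cdots \wedge \der\log x_n$ on the dense torus and observing that it acquires a simple pole along each $\bbT_\bN$-invariant prime divisor $D_\rho$.

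The main obstacle is the Euler sequence itself; everything afterwards is essentially formal. Its rigorous construction requires verifying that the infinitesimal $G$-action on $\C^{\D(1)} \setminus Z(\D)$ produces a trivial subbundle of the expected rank and that the cokernel is canonically the pullback of $T_{\P_\D}$. This can be done either by the GIT quotient argument sketched above, or, alternatively, by patching local descriptions of $T_{\P_\D}$ on each affine toric chart $U_\sigma$ and using that on each $U_\sigma$ the tangent bundle is trivialized by the characters $\chi^m$ for $m$ in a basis of $\bM$, matching up against the $\mathcal{O}(D_\rho)$ contributions from rays $\rho \in \sigma(1)$.
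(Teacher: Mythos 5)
This proposition is quoted directly from \cite{CLS} (Proposition 13.1.2) and the paper supplies no proof of its own, so there is nothing internal to compare against; your argument via the generalized Euler sequence
$0 \to \Pic(\P_\D)^\vee \otimes_\Z \mathcal{O}_{\P_\D} \to \bigoplus_{\rho} \mathcal{O}_{\P_\D}(D_\rho) \to T_{\P_\D} \to 0$
together with the Whitney sum formula and the pole computation for $\der\log x_1 \wedge \cdots \wedge \der\log x_n$ is precisely the standard proof given in that reference, and it is correct. The only nitpick is in your GIT sketch: for a \emph{smooth} fan the $G$-action on $\C^{\D(1)} \setminus Z(\D)$ is actually free (not merely with finite stabilizers), which is what makes the descent of the coordinate vector fields to honest line bundles $\mathcal{O}_{\P_\D}(D_\rho)$ unproblematic.
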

We use the following {\it Noether's formula} in order to compute the cohomology of complete intersections in $\P_{\D}$ 
at a later stage in our argument (Section $\ref{sec:Ex}$).
\begin{theorem}[Noether's formula]\label{th:Noether}
Let $S$ be a complex $2$-dimensional compact manifold and $h^{p,q}$ with $p,q\in\set{0,1,2}$ be the
Hodge numbers of $S$. Then we have
\begin{align}\label{eq:Noether}
\begin{split}
h^{0,0}-h^{0,1}+h^{0,2}&=\frac{1}{12}\int_S c_1(S)^2+c_2(S), \\
h^{1,0}-h^{1,1}+h^{1,2}&=\frac{1}{6}\int_S c_1(S)^2-5c_2(S), \\
h^{2,0}-h^{2,1}+h^{2,2}&=\frac{1}{12}\int_S c_1(S)^2+c_2(S).
\end{split}
\end{align}
\end{theorem}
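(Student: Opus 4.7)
The plan is to recognize each left-hand side as the holomorphic Euler characteristic $\chi(\Omega^p_S)$ of the sheaf of holomorphic $p$-forms on $S$ for $p=0,1,2$, and then to apply the Hirzebruch--Riemann--Roch theorem on the surface $S$ together with some elementary Chern-character bookkeeping. By Dolbeault's theorem, $h^{p,q}(S)=\dim H^q(S,\Omega^p_S)$, so the three alternating sums in \eqref{eq:Noether} are exactly $\chi(\mathcal{O}_S)$, $\chi(\Omega^1_S)$, and $\chi(\Omega^2_S)=\chi(K_S)$ respectively. Note also that the first and third formulas agree, which is consistent with Serre duality $H^q(S,K_S)\cong H^{2-q}(S,\mathcal{O}_S)^*$; so it suffices to establish the first and second formulas.

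For the first formula I would invoke HRR,
\begin{equation*}
\chi(\mathcal{O}_S)=\int_S \mathrm{ch}(\mathcal{O}_S)\cdot\mathrm{Td}(S),
\end{equation*}
together with $\mathrm{ch}(\mathcal{O}_S)=1$ and the standard expansion
\begin{equation*}
\mathrm{Td}(S)=1+\tfrac{1}{2}c_1(S)+\tfrac{1}{12}\bigl(c_1(S)^2+c_2(S)\bigr),
\end{equation*}
picking off the degree-$2$ part and integrating. This immediately gives the first line of \eqref{eq:Noether}, and the third line then follows from it via Serre duality (or, equivalently, by applying HRR to $K_S$ and using $\mathrm{ch}(K_S)=1-c_1(S)+\tfrac{1}{2}c_1(S)^2$, whose contribution with $\mathrm{Td}(S)$ in degree $2$ simplifies back to $\tfrac{1}{12}(c_1(S)^2+c_2(S))$).

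For the middle formula, the essential input is the Chern character of $\Omega^1_S$. Using $\Omega^1_S=(T^{1,0}S)^*$ together with the splitting principle, one obtains
\begin{equation*}
\mathrm{ch}(\Omega^1_S)=2-c_1(S)+\tfrac{1}{2}\bigl(c_1(S)^2-2c_2(S)\bigr).
\end{equation*}
Then I would multiply this by $\mathrm{Td}(S)$ and collect the degree-$2$ terms:
\begin{equation*}
\bigl[\mathrm{ch}(\Omega^1_S)\cdot\mathrm{Td}(S)\bigr]_2
=\tfrac{1}{6}\bigl(c_1(S)^2+c_2(S)\bigr)-\tfrac{1}{2}c_1(S)^2+\tfrac{1}{2}\bigl(c_1(S)^2-2c_2(S)\bigr)
=\tfrac{1}{6}\bigl(c_1(S)^2-5c_2(S)\bigr),
\end{equation*}
and apply HRR again to obtain the middle line of \eqref{eq:Noether}. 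There is no genuine obstacle here beyond correct bookkeeping of Chern characters; the main thing to be careful about is the sign in $\mathrm{ch}(\Omega^1_S)$ (the dualization flips the sign of the odd Chern-character components) and the combinatorial coefficients when expanding in degree $2$. Since $S$ is only assumed compact complex (not necessarily algebraic or K\"ahler), one invokes the version of HRR valid for compact complex manifolds, which is standard.
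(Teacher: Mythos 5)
Your proof is correct: the three left-hand sides are $\chi(\mathcal{O}_S)$, $\chi(\Omega^1_S)$ and $\chi(\Omega^2_S)$, and your Chern-character bookkeeping (in particular $\mathrm{ch}(\Omega^1_S)=2-c_1+\tfrac12(c_1^2-2c_2)$ and the degree-$2$ coefficient $\tfrac16(c_1^2-5c_2)$) checks out. The paper states this theorem without proof as a classical fact, and your Hirzebruch--Riemann--Roch derivation is exactly the standard justification it implicitly relies on; your remarks about Serre duality for the third line and about the version of HRR valid for arbitrary compact complex surfaces are both apt.
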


\subsection{How to find $\widehat{A}(M)$}
According to the classification results of toric Fano fourfolds \cite{Bat98}, \cite{Sato02}, we can construct
a Calabi-Yau fourfold by the doubling construction from any of the $124$ types of toric Fano fourfolds.
\begin{proposition}\label{prop:CY4}
Let $\P_{\Delta}$ be a toric Fano fourfold and let $(\overline{X},D)$ be the corresponding admissible pair 
of Fano type given in \cite{DY14}, Theorem $5.1$.
Then the simply-connected compact $8$-manifold $M$ 
obtained from two copies of $(\overline{X},D)$ by Corollary $\ref{cor:doubling}$
satisfies $\widehat{A}(M)=2$. In particular, $M$ admits a Ricci-flat K\"ahler metric with holonomy group
$\mathrm{SU}(4)$.
\end{proposition}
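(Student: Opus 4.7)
The plan is to apply Theorem \ref{thm:A-hat}, which is available because Corollary \ref{cor:doubling} already provides $\mathrm{Hol}(g)\subseteq\Spin$. The proposition will then reduce to the numerical identity $3\tau(M)-\chi(M)=96$. The approach is to compute $\chi(M)$ and $\tau(M)$ topologically, express each in terms of invariants of $(\overline{X},D)$, and evaluate the resulting identity using the toric machinery of Section \ref{sec:ToricGeom}.

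To compute $\chi(M)$, observe that $M$ is diffeomorphic to $W_1\cup_\phi W_2$, where $W_i=\overline{X}_i\setminus\nu(D_i)$ is the compact truncation of $X_i$ with boundary $D\times S^1$. Inclusion-exclusion together with $\chi(D\times S^1)=0$ gives $\chi(M)=2\chi(W)$, and additivity of the Euler characteristic on the stratification $\overline{X}=(\overline{X}\setminus D)\sqcup D$ gives $\chi(W)=\chi(\overline{X})-\chi(D)$; hence $\chi(M)=2\bigl(\chi(\overline{X})-\chi(D)\bigr)$. For $\tau(M)$ I would invoke Novikov additivity. The gluing diffeomorphism of Lemma \ref{lem:gluing_condition} is orientation-reversing between $\partial W_1$ and $\partial W_2$ (the factor $\theta\mapsto-\theta$ ensures this), so both halves enter $M$ with their natural complex orientations and $\tau(M)=2\tau(W)$. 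A second application of Novikov additivity to $\overline{X}=W\cup\nu(D)$ gives $\tau(\overline{X})=\tau(W)+\tau(\nu(D))$. Since $\nu(D)\cong D\times\mathbb{D}^2$ has vanishing signature (by K\"unneth $H^4(D\times\mathbb{D}^2,D\times S^1)\cong H^2(D)$, and the cup-product pairing lands in $H^4(\mathbb{D}^2,S^1)=0$), one obtains $\tau(W)=\tau(\overline{X})$ and therefore $\tau(M)=2\tau(\overline{X})$.

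Substituting into Theorem \ref{thm:A-hat}, the proposition reduces to the integer identity
\begin{equation*}
3\tau(\overline{X})-\chi(\overline{X})+\chi(D)=48.
\end{equation*}
Since $\overline{X}=\mathrm{Bl}_S\P_\Delta$ for a surface $S\subset D_0\subset\P_\Delta$ representing $D_0\cdot D_0$, with $D$ the proper transform of the smooth anticanonical divisor $D_0$ and $D\cong D_0$, every term on the left reduces to intersection numbers on $\P_\Delta$. The Hodge-number blow-up formula $h^{p,q}(\mathrm{Bl}_S\P_\Delta)=h^{p,q}(\P_\Delta)+h^{p-1,q-1}(S)$ yields $\chi(\overline{X})=\chi(\P_\Delta)+\chi(S)$ and $\tau(\overline{X})=\tau(\P_\Delta)-\tau(S)$; adjunction with $c_1(\P_\Delta)=[D_0]$ gives $\chi(D)=\int_{\P_\Delta}c_1(\P_\Delta)\bigl(c_3(\P_\Delta)-c_1(\P_\Delta)c_2(\P_\Delta)\bigr)$; and the invariants $\chi(S),\tau(S)$ of the toric complete intersection $S$ are handled by Noether's formula (Theorem \ref{th:Noether}) together with Hirzebruch's signature theorem. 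All of the intersection numbers are combinatorially computable from the fan $\Delta$ by Propositions \ref{prop:cohomology} and \ref{prop:chern}.

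The main obstacle is the verification itself: there is no obvious a priori reason why the combination $3\tau(\overline{X})-\chi(\overline{X})+\chi(D)$ must equal exactly $48$ for every admissible pair coming from a toric Fano fourfold, so one has to turn the above recipe into an explicit algorithm driven by the fan $\Delta$ and run it on each of the $124$ toric Fano fourfolds classified by Batyrev \cite{Bat98} and Sato \cite{Sato02}. Carrying this out is the content of Section \ref{sec:Ex} and the sections that follow; once the identity is confirmed in every case, Theorem \ref{thm:A-hat} upgrades $\mathrm{Hol}(g)\subseteq\Spin$ to $\mathrm{Hol}(g)=\SU(4)$, so $M$ is a Calabi-Yau fourfold.
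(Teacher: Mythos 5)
Your proposal is correct and follows essentially the same route as the paper: reduce to the identity $48\widehat{A}(M)=3\tau(M)-\chi(M)$ via Theorem \ref{thm:A-hat}, express $\chi(M)=2(\chi(\P_\Delta)+\chi(S)-\chi(D))$ and $\tau(M)=2(\tau(\P_\Delta)-\tau(S))$ through the blow-up and gluing formulas, and then verify the resulting numerical identity case by case for all $124$ toric Fano fourfolds using the fan data, Noether's formula, and the Hodge index theorem. You also correctly identify the key point that there is no uniform a priori argument here and the content of the proof is the exhaustive combinatorial verification carried out in Sections \ref{sec:Ex} and the tables.
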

The proof of Proposition $\ref{prop:CY4}$ is based on the computation of $\widehat{A}(M)$ one by one.
Here is a procedure for calculating $\widehat{A}(M)$ by toric geometrical technique which we have already 
explained above.
\begin{enumerate}
\item[(a)] According to the classification result of toric Fano fourfolds due to Batyrev \cite{Bat98} and Sato
\cite{Sato02}, there are $124$ types of toric Fano fourfolds. A database of classifications of
smooth toric Fano varieties is available at \cite{graded_ring}. For a fixed $4$-dimensional complete fan $\D$
(which is also called a \emph{Fano polytope}), find the primitive collection $\PC (\D)$ from the lists
in \cite{Bat98}, \cite{Sato02}. Then Proposition $\ref{prop:cohomology}$ implies that
\begin{equation}\label{eq:cohomology}
H^{\bullet}(\P_{\D},\Z)=\Z[x_1,\dots,x_r]/(\mathscr{I}_{\D}+\mathcal{J}_{\D}).
\end{equation}
It is easy to compute the right hand side of \eqref{eq:cohomology} by $\PC(\D)$ and $\D(1)$.
\item[(b)] Suppose that the total coordinate ring $S(\D)$ of $\P_{\D}$ is given by
\[
S(\D)=\C[x_{\rho}\mid\rho \in \D(1)]
\]
with $\deg(x_{\rho})=a_{\rho}$. 
Recall that the anticanonical divisor $-K_{\P_{\D}}$ on $\P_{\D}$ is given by
$-K_{\P_{\D}}=\sum_{\rho \in \D(1)}D_{\rho}$. 
Hence we consider a divisor $D\in \norm{-K_{\P_{\D}}}$ which is
given by $D=\mathbf{V}(f)$ for a \emph{generic} $G$-homogeneous polynomial $f$ of $S(\Delta )$
with degree $\a=\sum_{\rho \in \D(1)}a_{\rho}$. 
\item[(c)] Let $S$ be a generic hypersurface of degree $\a$ in $D$ representing the
self-intersection class of $D\cdot D$. Compute the Chern class $c(S)$ (resp. $c(D)$) by Proposition 
$\ref{prop:chern}$ and the adjunction formula. Then the Euler characteristic $\chi(S)$ (resp. $\chi(D)$)
is determined by the 
Chern-Gauss-Bonnet formula
\begin{equation}\label{eq:Euler2}
\int_X c_n(X)=\chi(X), \qquad n=\dim_{\C}X.
\end{equation}
\item[(d)] Since $D\in\norm{-K_{\P_{\D}}}$ is a Calabi-Yau threefold, we know that $h^{0,0}(D)=h^{3,0}(D)=1$
and $h^{1,0}(D)=h^{2,0}(D)=0$ by \cite{J00}, Proposition $6.2.6$. Find all the Hodge numbers $h^{p,q}(D)$
using the Lefschetz hyperplane theorem and $\chi(D)=\sum_{p,q}(-1)^{p+q}h^{p,q}(D)$.
\item[(e)] Using the Lefschetz hyperplane theorem and Poincar\'e duality, we can calculate $H^i(S)$ for $i\neq 2$.
In order to find $h^{0,2}(S)$ and $h^{1,1}(S)$, we use Nother's formula \eqref{eq:Noether}. Since the Euler 
characteristic is also given by $\chi(S)=\sum_{p,q}(-1)^{p+q}h^{p,q}(S)$, we can check the consistency
of the values of $h^{p,q}(S)$.
\item[(f)] Calculate $\tau(S)$ by the Hodge index theorem
\begin{equation}\label{eq:Hodge_index}
\tau(S)=\sum_{p,q=0}^2 (-1)^q h^{p,q}(S)=2-h^{1,1}(S)+2 h^{0,2}(S).
\end{equation}
\item[(g)] Let $\varpi : \overline{X}\dasharrow \P_{\D}$ be the blow-up of $\P_{\D}$ along the complex 
surface $S$. Taking the proper transform of $D$ under the blow-up $\varpi$, we still denote it by $D$.
Then $\chi(\overline{X})$ is given by the formula
\[
\chi(\overline{X})=\chi(\P_{\D})-\chi(S)+\chi(E),
\]
where $E$ is the exceptional divisor of the blow-up $\varpi$. Since $E$ is a $\C P^1$-bundle over $S$,
we have
\begin{align}\label{eq:Euler}
\begin{split}
\chi(M)&=2(\chi(\overline{X})-\chi(D)) \\
&=2(\chi(\P_{\D})+\chi(S)-\chi(D)).
\end{split}
\end{align}
It is easy to compute $\chi(\P_{\D})$ using toric geometry. Thus, \eqref{eq:Euler} and the results of
(c) give $\chi(M)$.
\item[(h)] Similarly, $\tau(\overline{X})$ is given by the formula
\[
\tau(\overline{X})=\tau(\P_{\D})-\tau(S)
\]  
as the exceptional divisor has no contribution to $\tau(\overline{X})$. Thus we have
\begin{align}\label{eq:Signature}
\begin{split}
\tau(M)&=\tau(X\cup X) \\
&=2\tau(\overline{X})-\tau(D\times \C P^1)\\
&=2(\tau(\P_{\D})-\tau(S)).
\end{split}
\end{align}
We remark that $b^{2p}(\P_{\D})=h^{p,p}(\P_{\D})$ and $b^{2p+1}(\P_{\D})=0$ hold by \cite{CLS}, 
Theorem $9.3.2$. 
Then we compute $\tau(\P_{\D})$ using the Hodge index theorem
and \cite{Ful93}, p. $92$.
Substituting $\tau (S)$ in \eqref{eq:Hodge_index} and $\tau (\P_\D )$ gives $\tau (M)$.
\item[(i)] Substituting $\chi(M)$ and $\tau(M)$ into \eqref{eq:A-hat}, we conclude $\widehat{A}(M)=2$.
\end{enumerate}
The following Section $\ref{sec:Ex}$ illustrates good examples of these computations.

\section{Examples}\label{sec:Ex}
We shall compute $\widehat{A}(M)$ using the classification of toric Fano fourfolds \cite{Bat98}, \cite{Sato02}.

\begin{example}[$\C P^4$]\label{ex:CP4}\rm
Let $\D$ be the complete fan in $\bN_\R\cong \R^4$ whose $1$-dimensional cones are given by 
$\D (1)=\set{\rho_1,\rho_2,\rho_3,\rho_4,\rho_5}$ where
\begin{eqnarray*}
\rho_1=\Cone (e_1), \quad \rho_2=\Cone (e_2), \quad \rho_3=\Cone (e_3), \\
\rho_4=\Cone (e_4), \quad \text{and} \quad \rho_5=\Cone (-e_1-e_2-e_3-e_4).
\end{eqnarray*}
Then the associated toric variety is $\C P^4$ and $\PC(\D)=\set{\set{\rho_1,\rho_2,\rho_3,\rho_4,\rho_5}}$ 
(see \cite{Bat98}, \cite{Sato02}).
Hence the Stanley-Reisner ideal $\mathscr{I}_{\D}$  
is given by $\braket{x_1 x_2 x_3 x_4 x_5}$. 
Then Proposition $\ref{prop:cohomology}$ yields
\begin{align*}
H^{\bullet} (\C P^4, \Z)& \cong   \Z[x_1,x_2,x_3,x_4,x_5]/ \braket{x_1 x_2 x_3 x_4 x_5, x_1-x_5, x_2-x_5,x_3-x_5,x_4-x_5}\\
 & \cong  \Z[x_1]/\braket{x_1^5}. 
\end{align*}
The map $\bM\longrightarrow \Z^{\D(1)}$ can be written as
\begin{equation*}
\Z^4 \longrightarrow \Z^5 ,\qquad (m_1,m_2,m_3,m_4)\longmapsto (m_1,m_2,m_3,m_4,-(m_1+m_2+m_3+m_4)).
\end{equation*}
Using the map
\begin{equation*}
\Z^5 \longrightarrow \Z ,\qquad (a_1,a_2,a_3,a_4,a_5)\longmapsto a_1+a_2+a_3+a_4+a_5,
\end{equation*}
we have the exact sequence
\begin{equation*}
0 \longrightarrow \Z^4 \longrightarrow \Z^5 \longrightarrow \Z \longrightarrow 0.
\end{equation*}
Thus \eqref{diagram:Ful} implies that $A_3(\C P^4)\cong \Z$ with the generator 
$[D_1]=[D_2]=[D_3]=[D_4]=[D_5]$. Then Proposition $\ref{prop:chern}$ (i) gives $c(\C P^4)=(1+x)^5$. 
It is easy to see that the total coordinate ring  is
$S(\D)=\C [x_1,x_2,x_3,x_4,x_5]$ with $\deg(x_i)=1$. This gives a direct sum decomposition 
\[
\displaystyle S(\D)=\bigoplus_{\alpha \in \Z} S(\D)_{\alpha}.
\]

Now the anticanonical degree of $S(\D)$ is given by $\sum_{i=1}^5\deg(x_i)=5$.
Hence we consider $D\in\norm{-K_{\C P^4}}$ defined by  $D={\bf{V}}(f)$ for a generic homogeneous 
polynomial $f\in S(\D)_5$.
Then $1+c_1([D])=1+5x$ (see, Proposition $\ref{prop:chern}$ (ii)).
Therefore $c(D)=(1+x)^5(1+5x)^{-1}$, so that the Chern-Gauss-Bonnet theorem \eqref{eq:Euler2} implies
\[
\chi(D)=\braket{c_3(D), [D]}=\braket{-40x^3, [5x]}=-200,
\]
where we used the normalization $\int_{\C P^4}x^4=1$.
Especially the Lefschetz hyperplane theorem implies $h^{1,1}(D)=b^2(\P_{\D})=1$. As $D$ is a Calabi-Yau divisor,
we also see that $h^{3,0}(D)=h^{0,0}(D)=1$, whence $h^{2,1}(D)=\frac{1}{2}(2h^{1,1}(D)-\chi(D))=101$.

Let $S$ be a generic smooth complex surface in $D$ representing the self-intersection class of $D\cdot D$. The Chern class
$c(S)=(1+x)^5(1+5x)^{-2}$ determines the Euler characteristic by
\[
\chi(S)=\braket{c_2(S), [S]}=\braket{35x^2, [(5x)^2]}=875.
\]
Remark that $b^1(S)=0$ as $S$ is simply-connected. In order to find the cohomology of the middle dimension 
$H^{p,q}(S) \;\; (p+q=2)$, we will use Noether's formula as follows. 
Since $h^{0,0}(S)=1$ and $h^{0,1}(S)=0$, Theorem $\ref{th:Noether}$ implies that
\[
12(1+h^{0,2}(S))=\braket{35x^2+25x^2, [(5x)^2]}.
\]
Hence we conclude that $h^{0,2}(S)=124$. Similarly we have 
\[
6(-h^{1,1}(S))=\braket{25x^2-5\cdot 35x^2, [(5x)^2]}.
\]
Then $h^{1,1}(S)=625$. Summing up these computations, we conclude
 \begin{equation*}
 h^{p,q}(S) =
\begin{array}{ccccc}
&& 1 && \\
& 0 && 0 & \\
124 && 625 && 124 \\
& 0 && 0 & \\
&& 1 && \\
\end{array}.
\end{equation*}
This result is consistent with the value $\chi(S)=875$. 
By the Hodge index theorem, we find the signature of $S$ is $\tau(S)=-375$.

Finally we shall show that the resulting $8$-manifold $M$ admits a metric with holonomy $\SU(4)$.
By \eqref{eq:Euler}, we have $\chi(M)=2(5+875-(-200))=2160$. Meanwhile, $\tau(M)=2(1-(-375))=752$ by \eqref{eq:Signature}.
Therefore \eqref{eq:A-hat} gives $\widehat{A}(M)=2$. The assertion follows from Corollary $\ref{cor:doubling}$.
\end{example}

\begin{example}[$B_1$]\rm
Let $\D$ be the complete fan in $\bN_\R\cong \R^4$ whose $1$-dimensional cones are given by 
$\D (1)=\set{\rho_1,\dots, \rho_6}$ where
\begin{eqnarray*}
\rho_1=\Cone (e_4), \quad \rho_2=\Cone (e_1), \quad \rho_3=\Cone (e_2), \quad \rho_4=\Cone (e_3),\\
\rho_5=\Cone (-e_4), \quad \text{and} \quad \rho_6=\Cone (-e_1-e_2-e_3+3e_4).
\end{eqnarray*}
Then we readily see that $\P_{\D}=\P(\mathcal{O}_{\C P^3}\oplus \mathcal{O}_{\C P^3}(3))$, 
$\PC(\D)=\set{\set{\rho_1,\rho_5}, \set{\rho_2, \rho_3, \rho_4, \rho_6}}$ and 
$\mathscr{I}_{\D}=\braket{x_1x_5, x_2x_3x_4x_6}$.
Therefore, Proposition $\ref{prop:cohomology}$ implies that
\begin{equation}\label{eq:B2_cohomology}
H^{\bullet} (\P_{\D}, \Z) \cong   \Z[x,y]/\braket{x(x+3y), y^4}. 
\end{equation}
The map $\bM\longrightarrow \Z^{\D(1)}$ is defined by
\begin{equation*}
\Z^4 \longrightarrow \Z^6 ,\qquad (m_1,\dots, m_6)\longmapsto (m_4,m_1,m_2,m_3,-m_4,-m_1-m_2-m_3+3m_4).
\end{equation*}
Meanwhile, the Chow group is generated by the classes of $D_i$ with the relations
\begin{align*}
0\sim \divisor(\chi^{e_1})&=D_2-D_6, &\quad  0\sim \divisor(\chi^{e_2})&=D_3-D_6, \\
0\sim \divisor(\chi^{e_3})&=D_4-D_6, &\quad  0\sim \divisor(\chi^{e_4})&=D_1-D_5+3D_6,
\end{align*}
by \eqref{diagram:Ful}. 
Hence we conclude that $A_3(\P_{\D})\cong \Z^2$ with generators 
$[D_2]=[D_3]=[D_4]=[D_6]$ and $[D_5]=[D_1]+3[D_6]$. Then Proposition $\ref{prop:chern}$ (i) gives 
$c(\P_{\D})=(1+x)(1+y)^4(1+x+3y)$. In this case, the map $\Z^{\D(1)}=\Z^6\longrightarrow \Z^2=A_3(\P_{\D})$ is
$(a_1,\dots, a_6)\longmapsto (a_1+a_5, a_2+a_3+a_4+3a_5+a_6)$. This gives the grading on
$S(\D)=\C[x_1,\dots,x_6]$, where 
\begin{align*}
\deg(x_1)&=(1,0), \qquad  \deg(x_5)=(1,3), \\
\deg(x_2)&=\deg(x_3)=\deg(x_4)=\deg(x_6)=(0,1).
\end{align*}
In particular, $S(\D)$ is a bihomogeneous polynomial ring.

Now the anticanonical degree of $S(\D)$ is $\a:=\sum_{i=1}^6\deg(x_i)=(2,7)$.
Taking a generic homogeneous polynomial $f\in S(\D)_{\a}$, we consider a torus-invariant anticanonical
divisor $D={\bf{V}}(f)$ on $\P_{\D}$.
Since $1+c_1([D])=1+(2x+7y)$, we have
$c(D)=(1+x)(1+y)^4(1+x+3y)(1+2x+7y)^{-1}$, so that
\[
\chi(D)=\braket{c_3(D), [D]} =\braket{(-4)\cdot(26y^3+8xy^2), [2x+7y]}=-240,
\]
where we used the normalization $\int_{B_1}xy^3=1$.
The Lefschetz hyperplane theorem gives $h^{1,1}(D)=b^2(\P_{\D})=2$, whence $h^{2,1}(D)=122$.

Let $S$ be a generic ample hypersurface of degree $\a$ in $D$ as in Example $\ref{ex:CP4}$. Since
$c(S)=(1+x)(1+y)^4(1+x+3y)(1+2x+7y)^{-2}$, we have $c_2(S)=67y^2+36xy+4x^2=67y^2+24xy$ where we used
the relation\footnote{In the practical computation we used packages (a) Macaulay2 and (b) Maxima. 
These open source algebra systems are available at {\tt{http://www.math.uiuc.edu/Macaulay2}} and {\tt{http://maxima.sourceforge.net}} respectively. } 
in the cohomology ring \eqref{eq:B2_cohomology}. Hence 
\[
\chi(S)=\braket{67y^2+24xy, [(2x+7y)^2]}=1096.
\]
Now Theorem $\ref{th:Noether}$ implies that
$12(1+h^{0,2}(S))=1896$ and $6h^{1,1}(S)=4680$.  
Consequently we have
 \begin{equation*}
 h^{p,q}(S) =
\begin{array}{ccccc}
&& 1 && \\
& 0 && 0 & \\
157 && 780 && 157 \\
& 0 && 0 & \\
&& 1 && \\
\end{array}.
\end{equation*}
Then the Hodge index theorem gives $\tau(S)=-464$. Thus $\tau(M)=2(0-(-464))=928$ by \eqref{eq:Signature}.
Also, \eqref{eq:Euler} gives $\chi(M)=2(8+1096-(-240))=2688$. Hence $\widehat{A}(M)=\frac{1}{48}(3\cdot 928
-2688)=2$ by \eqref{eq:A-hat}.
\end{example}

\newpage

\section{Table of examples from toric Fano fourfolds}
In the following table we give the list of all Calabi-Yau fourfolds constructed in Proposition $\ref{prop:CY4}$.
In the table below `ID' denotes the database ID in \cite{graded_ring}, 
and $(\chi (M),\tau (M))$ denotes the pair of the Euler characteristic and the signature 
of the resulting Calabi-Yau fourfold $M$. In the last column
we indicate the same notation of toric Fano fourfolds as in \cite{Bat98} and \cite{Sato02}.

\begin{table}[H]
\caption{All possible Calabi-Yau fourfolds from toric Fano fourfolds} \vspace{0.3cm}
\begin{center} 
\begin{tabular}{rrcc} \toprule
No. & ID  & ($\chi(M),\tau(M))$ & Notation \\  \midrule
 $1$    &  $147$    &   $(2160,752)$ & $\C P^4$  \\ 
 $2$    &  $25$    &   $(2688,928)$ & $B_1$  \\ 
 $3$    &  $139$    &   $(2208,768)$ & $B_2$  \\
 $4$    &  $144$    &   $(1920,672)$ & $B_3$  \\
 $5$    &  $145$    &   $(1824,640)$ & $B_4$  \\
 $6$    &  $138$    &   $(1824,640)$ & $B_5$  \\
 $7$    &  $44$    &   $(2058,718)$ & $C_1$  \\
 $8$    &  $141$    &   $(1824,640)$ & $C_2$  \\
 $9$    &  $70$    &   $(1824,640)$ & $C_3$  \\
 $10$  &  $146$    &   $(1746,614)$ & $C_4$  \\
 $11$  &  $24$    &   $(2124,740)$ & $E_1$  \\
 $12$  &  $128$    &   $(1764,620)$ & $E_2$  \\
 $13$  &  $127$    &   $(1584,560)$ & $E_3$  \\
 $14$  &  $30$    &   $(2064,720)$ & $D_1$  \\
 $15$  &  $31$    &   $(2016,704)$ & $D_2$  \\
 $16$  &  $49$    &   $(1968,688)$ & $D_3$  \\
 $17$  &  $35$    &   $(1968,688)$ & $D_4$  \\
 $18$  &  $42$    &   $(1776,624)$ & $D_5$  \\
 $19$  &  $129$    &   $(1776,624)$ & $D_6$  \\
 $20$  &  $97$    &   $(1740,612)$ & $D_7$  \\
 $21$  &  $134$    &   $(1728,608)$ & $D_8$  \\
 $22$  &  $66$    &   $(1680,592)$ & $D_9$  \\
 $23$  &  $132$    &   $(1680,592)$ & $D_{10}$  \\
 $24$  &  $117$    &   $(1662,586)$ & $D_{11}$  \\
 $25$  &  $140$    &   $(1632,576)$ & $D_{12}$  \\
 $26$  &  $143$    &   $(1584,560)$ & $D_{13}$  \\
 $27$  &  $133$    &   $(1584,560)$ & $D_{14}$  \\
 $28$  &  $135$    &   $(1584,560)$ & $D_{15}$  \\
 $29$  &  $68$      &   $(1584,560)$ & $D_{16}$  \\
 $30$  &  $109$    &   $(1506,534)$ & $D_{17}$  \\
 $31$  &  $43$      &   $(1488,528)$ & $D_{18}$  \\
 $32$  &  $136$    &   $(1488,528)$ & $D_{19}$  \\
 $33$  &  $41$      &   $(1872,656)$ & $G_{1}$  \\
 $34$  &  $40$      &   $(1626,574)$ & $G_{2}$  \\
 $35$  &  $64$      &   $(1584,560)$ & $G_{3}$  \\
 $36$  &  $60$      &   $(1536,544)$ & $G_{4}$  \\
 $37$  &  $69$      &   $(1506,534)$ & $G_{5}$  \\
 $38$  &  $137$    &   $(1488,528)$ & $G_{6}$  \\
 $39$  &  $26$      &   $(1974,690)$ & $H_{1}$  \\
$40$  &  $45$      &   $(1812,636)$ & $H_{2}$  \\
$41$  &  $28$      &   $(1734,610)$ & $H_{3}$  \\
\bottomrule
\end{tabular} \hfill
\begin{tabular}{rrcc} \toprule
No. & ID  & ($\chi(M),\tau(M))$ & Notation \\  \midrule
$42$  &  $118$    &   $(1632,576)$ & $H_{4}$  \\
$43$  &  $123$    &   $(1536,544)$ & $H_{5}$  \\
$44$  &  $48$      &   $(1524,540)$ & $H_{6}$  \\
$45$  &  $32$      &   $(1446,514)$ & $H_{7}$  \\
$46$  &  $124$    &   $(1422,506)$ & $H_{8}$  \\
$47$  &  $125$    &   $(1392,496)$ & $H_{9}$  \\
$48$  &  $67$      &   $(1344,480)$ & $H_{10}$  \\
$49$  &  $74$      &   $(1728,608)$ & $L_{1}$  \\
$50$  &  $75$      &   $(1680,592)$ & $L_{2}$  \\
$51$  &  $83$      &   $(1632,576)$ & $L_{3}$  \\
$52$  &  $105$    &   $(1584,560)$ & $L_{4}$  \\
$53$  &  $95$      &   $(1536,544)$ & $L_{5}$  \\
$54$  &  $112$      &   $(1488,528)$ & $L_{6}$  \\
$55$  &  $106$      &   $(1440,512)$ & $L_{7}$  \\
$56$  &  $142$      &   $(1440,512)$ & $L_{8}$  \\
$57$  &  $130$      &   $(1440,512)$ & $L_{9}$  \\
$58$  &  $114$      &   $(1440,512)$ & $L_{10}$  \\
$59$  &  $131$      &   $(1344,480)$ & $L_{11}$  \\
$60$  &  $108$      &   $(1344,480)$ & $L_{12}$  \\
$61$  &  $96$      &   $(1344,480)$ & $L_{13}$  \\
$62$  &  $33$      &   $(1776,624)$ & $I_{1}$  \\
$63$  &  $29$      &   $(1686,594)$ & $I_{2}$  \\
$64$  &  $47$      &   $(1620,572)$ & $I_{3}$  \\
$65$  &  $38$      &   $(1578,558)$ & $I_{4}$  \\
$66$  &  $34$      &   $(1542,546)$ & $I_{5}$  \\
$67$  &  $93$      &   $(1518,538)$ & $I_{6}$  \\
$68$  &  $37$      &   $(1488,528)$ & $I_{7}$  \\
$69$  &  $115$      &   $(1440,512)$ & $I_{8}$  \\
$70$  &  $94$      &   $(1452,516)$ & $I_{9}$  \\
$71$  &  $111$      &   $(1458,518)$ & $I_{10}$  \\
$72$  &  $59$      &   $(1440,512)$ & $I_{11}$  \\
$73$  &  $116$      &   $(1326,474)$ & $I_{12}$  \\
$74$  &  $126$      &   $(1392,496)$ & $I_{13}$  \\
$75$  &  $104$      &   $(1362,486)$ & $I_{14}$  \\
$76$  &  $39$      &   $(1290,462)$ & $I_{15}$  \\
$77$  &  $61$      &   $(1440,512)$ & $M_{1}$  \\
$78$  &  $50$      &   $(1536,544)$ & $M_{2}$  \\
$79$  &  $58$      &   $(1392,496)$ & $M_{3}$  \\
$80$  &  $57$      &   $(1392,496)$ & $M_{4}$  \\
$81$  &  $110$      &   $(1374,490)$ & $M_{5}$  \\
$82$  &  $36$      &   $(1398,498)$ & $J_{1}$  \\
\bottomrule
\end{tabular}
\end{center}
\end{table}

\begin{table}[H]
\begin{center} 
\begin{tabular}{rrcc} \toprule
No. & ID  & ($\chi(M),\tau(M))$ & Notation \\  \midrule
$83$  &  $65$      &   $(1266,454)$ & $J_{2}$  \\
$84$  &  $71$      &   $(1620,572)$ & $Q_{1}$  \\
$85$  &  $79$      &   $(1506,534)$ & $Q_{2}$  \\
$86$  &  $73$      &   $(1476,524)$ & $Q_{3}$  \\
$87$  &  $77$      &   $(1506,534)$ & $Q_{4}$  \\
$88$  &  $81$      &   $(1410,502)$ & $Q_{5}$  \\
$89$  &  $84$      &   $(1392,496)$ & $Q_{6}$  \\
$90$  &  $91$      &   $(1374,490)$ & $Q_{7}$  \\
$91$  &  $90$      &   $(1344,480)$ & $Q_{8}$  \\
$92$  &  $82$      &   $(1314,470)$ & $Q_{9}$  \\
$93$  &  $102$      &   $(1296,464)$ & $Q_{10}$  \\
$94$  &  $120$      &   $(1296,464)$ & $Q_{11}$  \\
$95$  &  $88$      &   $(1278,458)$ & $Q_{12}$  \\
$96$  &  $76$      &   $(1284,460)$ & $Q_{13}$  \\
$97$  &  $103$      &   $(1266,454)$ & $Q_{14}$  \\
$98$  &  $113$      &   $(1248,448)$ & $Q_{15}$  \\
$99$  &  $92$      &   $(1212,436)$ & $Q_{16}$  \\
$100$  &  $107$      &   $(1182,426)$ & $Q_{17}$  \\
$101$  &  $27$      &   $(1404,500)$ & $K_{1}$  \\
$102$  &  $46$      &   $(1368,488)$ & $K_{2}$  \\
$103$  &  $119$    &   $(1296,464)$ & $K_{3}$  \\
$104$  &  $122$    &   $(1260,452)$ & $K_{4}$  \\
$105$  &  $89$      &   $(1278,458)$ & $R_{1}$  \\
$106$  &  $51$      &   $(1248,448)$ & $R_{2}$  \\
$107$  &  $56$      &   $(1200,432)$ & $R_{3}$  \\
$108$  &  $52$      &   $(1218,438)$ &           \\
$109$  &  $72$      &   $(1224,440)$ & $U_{1}$  \\
$110$  &  $80$      &   $(1188,428)$ & $U_{2}$  \\
$111$  &  $78$      &   $(1188,428)$ & $U_{3}$  \\
$112$  &  $101$    &  $(1152,416)$ & $U_{4}$  \\
$113$  &  $121$    &  $(1152,416)$ & $U_{5}$  \\
$114$  &  $85$      &  $(1152,416)$ & $U_{6}$  \\
$115$  &  $86$      &  $(1116,404)$ & $U_{7}$  \\
$116$  &  $87$      &  $(1080,392)$ & $U_{8}$  \\
$117$  &  $62$      &  $(1200,432)$ & $\widetilde{V}^4$  \\
$118$  &  $63$      &  $(960,352)$ & ${V}^4$  \\
$119$  &  $98$      &  $(1170,422)$ & $S_2\times S_2$  \\
$120$  &  $99$      &  $(1044,380)$ & $S_2\times S_3$  \\
$121$  &  $100$      &  $(936,344)$ & $S_3\times S_3$  \\
$122$  &  $55$      &  $(1248,448)$ & $Z_1$  \\
$123$  &  $53$      &  $(1266,454)$ & $Z_2$  \\
$124$  &  $54$      &  $(1026,374)$ & $W$  \\
\bottomrule
\end{tabular}
\end{center}
\end{table}


\begin{thebibliography}{99}

\bibitem{Bat98}
V. V. Batyrev.
\newblock On the classification of toric Fano $4$-folds. Algebraic geometry, $9$,
\newblock {\em J. Math. Sci. (New York)}, \textbf{94} (1999), 1021--1050.

\bibitem{Cox95}
D. Cox.
\newblock The homogeneous coordinate ring of a toric variety,
\newblock {\em J. Algebraic. Geom}, \textbf{4} (1995), 15--50.

\bibitem{CLS}
D. Cox, J. Little and H. Schenck. 
\newblock {Toric Varieties}, 
\newblock {\em Grad. Stud in Math., vol. {\bf 124}}, 
\newblock AMS, Providence, 2011.

\bibitem{D09}
M. Doi.
\newblock Gluing construction of compact complex surface with trivial canonical bundle,
\newblock {\em J. Math. Soc. Japan}, \textbf{61} (2009), 853--884.

\bibitem{DY14}
M. Doi and N. Yotsutani. 
\newblock Doubling construction of Calabi-Yau threefolds.
\newblock {\em New York J. Math.}, \textbf{20} (2014), 1203--1235.

\bibitem{Dolga02}
I. Dolgachev.
\newblock {Lectures on Invariant Theory}, 
\newblock Cambridge Univ. Press, Cambridge, 2003.

\bibitem{Ful93}
W. Fulton. 
\newblock {Introduction to Toric Varieties}, 
\newblock {\em Annals of Math. Studies {\bf{131}}}, 
\newblock Princeton Univ. Press, Princeton, NJ, 1993.

\bibitem{Hy90}
R. Harvey.
\newblock {Spinors and Calibrations, Perspectives in Mathematics} 
\newblock \textbf{9}, Academic Press, San Diego, 1990.

\bibitem{Hn10}
H.-J. Hein.
\newblock Complete Calabi-Yau metrics from $\P^2\#9\overline{\P}^2$,
\newblock {\em arXiv: math.DG/1003.2646}.

\bibitem{J00}
D.D. Joyce.
\newblock {Compact Manifolds with Special Holonomy},
\newblock {\em Oxford Mathematical Monographs},
\newblock Oxford University Press, Oxford, 2000.

\bibitem{K03}
A. Kovalev.
\newblock  Twisted connected sums and special Riemannian holonomy,
\newblock {\em J. Reine Angew. Math.}, \textbf{565} (2003), 125--160.

\bibitem{S89}
S. M. Salamon.
\newblock {Riemannian Geometry and Holonomy Groups, Pitman Research Notes in Mathematics}
\newblock \textbf{201}, Longman, Harlow, 1989.

\bibitem{Sato02}
H. Sato.
\newblock Studies on toric Fano varieties,
\newblock {\em Tohoku. Math. Publ}, \textbf{23}. 
\newblock Tohoku Univ, Math Institute, Sendai, $2002$. vi+$99$ pp.

\bibitem{TY90}
G. Tian and S.-T. Yau.
\newblock Complete K\"{a}hler manifolds with zero Ricci curvature. I,
{\em J. Amer. Math. Soc.}, \textbf{3} (1990), 579--609.

\bibitem{graded_ring}
\newblock {Graded Ring Database},
\newblock {\tt http://grdb.lboro.ac.uk/forms/toricsmooth}.


\end{thebibliography}
\end{document}